\newtheorem{thm}{Theorem}[section]
\newtheorem{cor}[thm]{Corollary}
\newtheorem{lem}[thm]{Lemma}
\newtheorem{prop}[thm]{Proposition}
\theoremstyle{definition}
\newtheorem{defn}[thm]{Definition}
\theoremstyle{remark}
\newtheorem{ex}[thm]{Example}
\numberwithin{equation}{section}
\begin{document}

\title[Envelopes and Weakly Radicals]
 {Envelopes and Weakly Radicals of Submodules}

\author{Erol Y{\i}lmaz}
\address{Abant \.{I}zzet Baysal University, Bolu, Turkey}
\email{yilmaz\_e2@ibu.edu.tr}

\author{Sibel Cansu}
\address{\.{I}stanbul Technical University, \.{Istanbul, Turkey}}
\email{sibel@gmail.com}

\subjclass[2010]{13E05; 13E15; 13C99; 13P99}

\keywords{Envelopes, Weakly Prime Submodules, Weakly Radicals}

\begin{abstract}
Let $N$ be a submodule of a finitely generated module $M$ over a Noetherian ring. A method for the computation of the submodule generated by the envelope of $N$ is given. The relations between weakly prime submodules and their envelopes are investigated. Using these relations, a description of the weakly radical of a submodule is obtained. The results are illustrated by examples.

\end{abstract}

\maketitle

\section*{Introduction}
Throughout this paper all rings are commutative with identity and all modules
are unitary.

Let $R$ be a ring and $M$ be an $R$-module. A proper submodule $P$ of $M$ is said to be {\it primary submodule}  if whenever $rm \in P$ where $r \in R$ and $m \in M$ then $m \in P$ or $r^kM \subseteq N$ for some positive integer $k$.

Recall that $(P:M)=\{r \in R|rM\subseteq P\}$. If $P$ is a primary submodule of $M$ and $p=(P:M)$, then $P$ is called $p$-primary submodule (see \cite{mccasland}).

A {\it primary decomposition} of a submodule $N$ of $M$ is representation of $N$ as an intersection of finitely many primary submodule of $M$. Such a primary decomposition  $N=\cap_{i=1}^n Q_i$ with $p_i$-primary submodules $Q_i$ is called minimal if $p_i$'s are pairwise distinct and $Q_j \nsupseteq \cap_{i^\neq j}Q_i$ for all $j=1,\ldots,n$.

If $R$ is a Noetherian ring and $M$ is a finitely generated module, then any proper submodule $N$ has a minimal primary decomposition. The first uniqueness theorem states that for such a minimal primary decomposition the set of primes $\{p_1,\ldots,p_m\}$ is uniquely defined. These primes are called the associated primes of $M/N$. We denote this set by $Ass(M/N)$. It is clear that for any $p \in Ass(M/N)$, $(N:M) \subseteq p$.

The prime ideals in $Ass(M/N)$ that are minimal with respect to inclusion are called the isolated primes of $M/N$, the remaining associated prime ideals are the embedded primes of $M/N$.

The second uniqueness theorem states that not only the primes but also the primary components corresponding to isolated primes, the isolated components of $N$ in $M$, are uniquely defined. The other primary components, the embedded components of $N$ in $M$, need not be defined uniquely. The concepts and theorems about the primary decomposition of modules can be found in chapter 9 of \cite{sharp}.

The radical $\sqrt{I}$ of an ideal $I \subset R$ is characterized as the the set of elements $a \in R$ such that $a^n \in I$ for some positive integer $n$. The concept of envelope of a submodule is the generalization of this characterization to the modules. If $N$ is a submodule of an $R$-module $M$, then the envelope of $N$ in $M$ is defined to be the set
$$E_{M}(N)=\{rm: r\in R, m\in M \: and \: r^{k}m\in N \: for \:some \: k \in \mathbb{Z}^{+}\}.$$
Let $\langle E_M(N)\rangle$ be the submodule generated by the envelope. Although some methods for computing of radical of a submodule , which defined to be intersection of prime submodules containing $N$, are given in  \cite{marcelo} and \cite{smith}, it seems there is no description for the computation of the envelope in the literature. In section 1, we give a formula for the computation of $\langle E_M(N)\rangle$ if a minimal primary decomposition of $N$ is known. In this section, we use extensively the concepts and results from \cite{grabe}.

A proper submodule $N$ of an $R$-module $M$ is called \textit{a weakly prime submodule} if for each $m\in M$ and $a,b\in R$; $abm\in N$ implies that $am\in N$ or $bm\in N$. A proper submodule $N$ of an $R$-module $M$ is called \textit{a weakly primary submodule} if $abm\in N$ where $a,b\in R$ and $m\in M$, then either $bm\in N$ or $a^{k}m\in N$ for some $k\geq 1$. The concepts of weakly prime and weakly primary submodules are introduced a few years ago and they have been studied by some authors (for example see \cite{azizi1}, \cite{azizi2} and \cite{behboodi} ). In section 2, we investigated relations between weakly prime submodules and their envelopes. We also give an example to show  a conjecture given in \cite{behboodi} is false.

The weakly radical of a submodule $N$ of $M$, denoted by $wrad_M(N)$, is defined to be the intersection of all weakly prime submodules containing $N$. In \cite{azizi2}, a generalization of $\langle E_M(N) \rangle $ defined as follows: $E_0(N)=N, E_1(N)=E_M(N),E_2(N)=E_M(\langle E_M(N) \rangle)$, and for any positive integer $n$, it is defined $E_{n+1}(N)=E_M(\langle E_n(N)\rangle)$ inductively. $E_n(N)$ is called $n$-th envelope of $N$. Consider
$$
UE_M(N)=\bigcup_{n \in \mathbb{N}}\langle E_n(N)\rangle;
$$ $UE_M(N)$ is called the \textit{union of envelopes of} $N$. One can easily see that $N \subseteq \langle E_n(N) \rangle \subseteq UE_M(N) \subseteq wrad_M(N) \subseteq rad_M(N)$, for any $n \in \mathbb{N}$. If $UE_M(N)=rad_M(N)$( resp. $UE_M(N)=wrad_M(N)$), then it said to be radical formula (resp. weakly radical formula) holds for $N$.

A submodule $N$ is called a quasi-$p$-primary submodule in $M$, if $N$ has a unique isolated prime $p$ and possibly some embedded primes (see \cite{grabe}). In section 3, we show that weakly radical formula hold for quasi-primary submodules. Using this, we give a method to compute weakly radical of a submodule of a Noetherian module $M$.

\section{Envelope of Submodules}
Unless otherwise stated, after this point, we assume $R$ is a Noetherian ring, $M$ is finitely generated $R$- module and $N$ is proper submodule of $M$.

\begin{lem} \label{1}
Let $N=Q_{1}\cap Q_{2}\cap \cdots \cap Q_{k}$ be a minimal primary decomposition of $N$ where $\sqrt{Q_{i}:M}=p_{i}$ for all $i=1,2,\ldots ,k$. If $S=\{1,2,\ldots ,k\}$ and $ \emptyset \neq T \subsetneq S$, then
$$(\bigcap \limits_{i\in T} p_{i})(\bigcap \limits_{i\in S\setminus T} Q_{i})\subseteq \langle E_{M}(N) \rangle $$
\end{lem}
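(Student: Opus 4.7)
The plan is to prove the statement by a direct element-chase: I would show that every generator $rm$ of the product submodule $\bigl(\bigcap_{i\in T}p_i\bigr)\bigl(\bigcap_{i\in S\setminus T}Q_i\bigr)$ actually lies in $E_M(N)$ itself, and then conclude by the fact that $\langle E_M(N)\rangle$ is closed under finite sums.

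First I would pick arbitrary $r\in\bigcap_{i\in T}p_i$ and $m\in\bigcap_{i\in S\setminus T}Q_i$ and look for a positive integer $k$ with $r^k m\in N$. Since $r\in p_i=\sqrt{(Q_i:M)}$ for every $i\in T$, for each such $i$ there exists $n_i\in\mathbb{Z}^+$ with $r^{n_i}M\subseteq Q_i$. As $T$ is finite, setting $n=\max_{i\in T}n_i$ yields $r^n M\subseteq \bigcap_{i\in T}Q_i$; in particular $r^n m\in\bigcap_{i\in T}Q_i$.

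Next I would combine this with the choice of $m$: because $m\in\bigcap_{i\in S\setminus T}Q_i$ and each $Q_i$ is a submodule, the element $r^n m$ also belongs to $\bigcap_{i\in S\setminus T}Q_i$. Intersecting the two containments gives $r^n m\in\bigcap_{i\in S}Q_i=N$, so by the very definition of the envelope $rm\in E_M(N)\subseteq \langle E_M(N)\rangle$. Finally, an arbitrary element of $\bigl(\bigcap_{i\in T}p_i\bigr)\bigl(\bigcap_{i\in S\setminus T}Q_i\bigr)$ is a finite sum $\sum r_\ell m_\ell$ of such products, and $\langle E_M(N)\rangle$ is a submodule, so the sum stays inside, completing the proof.

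There is no real obstacle here; the only subtlety is recognising that the hypothesis $T\neq\emptyset$ is precisely what allows one to use $r\in p_i$ to pull out an exponent, while the hypothesis $T\subsetneq S$ ensures that the set $S\setminus T$ over which $m$ is constrained is non-empty (so the statement is not vacuous and the product makes sense as a submodule rather than as $R\cdot M$). The Noetherian/finitely generated hypotheses are not needed for this particular lemma beyond guaranteeing that the minimal primary decomposition exists in the first place.
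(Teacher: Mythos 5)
Your proof is correct and follows essentially the same element-chase as the paper: pull an exponent $n$ from $r\in\bigcap_{i\in T}p_i=\bigcap_{i\in T}\sqrt{Q_i:M}$ to get $r^n m\in\bigcap_{i\in T}Q_i$, observe $r^n m\in\bigcap_{i\in S\setminus T}Q_i$ since $m$ already lies there, conclude $r^n m\in N$ and hence $rm\in E_M(N)$, and finish by summing inside the generated submodule. Your explicit step of taking $n=\max_i n_i$ is a minor tidying of the same argument.
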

\begin{proof}
Let $n\in (\bigcap \limits_{i\in T} p_{i})(\bigcap \limits_{i\in S \setminus T} Q_{i})$. Then there exist $r_{j}\in \bigcap \limits_{i\in T} p_{i}$ and $m_{j}\in \bigcap \limits_{i\in S \setminus T} Q_{i}$ such that\\ $$n=r_{1}m_{1}+r_{2}m_{2}+\cdots +r_{s}m_{s}$$ for some $s\in \mathbb{Z}^{+}$.

\vspace{8pt}
Since ${{r}_{j}}\in \bigcap\limits_{i\in T}{{{p}_{i}}}$, ${{r}_{j}}^{{{k}_{j}}}M\subseteq \bigcap\limits_{i\in T}{{{Q}_{i}}}\text{ for some }{{k}_{j}}\in {{\mathbb{Z}}^{+}}$. In particular, ${{r}_{j}}^{{{k}_{j}}}m_{j}\in  \bigcap\limits_{i\in T}{{{Q}_{i}}}$ for all $j=1,2,\cdots, s$.

Since $m_{j}\in \bigcap \limits_{i\in S-T} Q_{i}$, ${{r}_{j}}^{{{k}_{j}}}m_{j}\in  \bigcap\limits_{i\in S-T}{{{Q}_{i}}}$ for all $j$. Thus we have ${{r}_{j}}^{{{k}_{j}}}m_{j}\in \bigcap\limits_{i=1}^{k}{{{Q}_{i}}}=N$ which means that $r_{j}m_{j}\in E_{M}(N)$ for all $j$. Thus $n\in \langle E_{M}(N) \rangle$.
\end{proof}

Before giving a formula for the envelope of a submodule in terms of its associated primes and primary submodules in its primary decomposition, we need some
technical prerequisites.

\begin{defn}
If $f \in R$ and $I$ is an ideal of $R$, then the set

$$ N:I^\infty=\{m \in M:  I^k m \subseteq N \: {\mathrm for \: some \: positive \: integer}\: k  \}$$

is called the \textit{stable quotient} of $N$ by $I$ in $M$.

\end{defn}
\begin{lem} \cite[Lemma 1]{grabe}\label{stable}
Let $P \subset M$ be a primary submodule of $M$ and $ f\in R$.

 $$(i) \: \: P:\langle f \rangle ^\infty=M \:\:{\mathrm if} \: \: f \in \sqrt{P:M}$$

$$(ii) \: \: P:\langle f \rangle ^\infty=P \: \: {\mathrm if} \: \:f \not \in \sqrt{P:M}$$

More generally, for arbitrary submodule $N$ of $M$ and its primary
decomposition \linebreak $N=\bigcap P_i$ into $p_i$-primary submodules $P_i$ we
get

$$ (iii) \: \: N: \langle f \rangle ^\infty =\bigcap_{f \not \in p_i}P_i $$

and for arbitrary ideal $I$ of $R$

$$ (iv) \: \: N:I^\infty =\bigcap_{I \not \subset p_i}P_i $$
\end{lem}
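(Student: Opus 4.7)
The plan is to handle the four parts of Lemma \ref{stable} in the order given, reducing each later part to the earlier ones. Throughout I unfold $\langle f\rangle^\infty$ into the concrete description $N:\langle f\rangle^\infty=\{m\in M:f^k m\in N\text{ for some }k\in\mathbb{Z}^+\}$, and similarly for $I^\infty$, since $\langle f\rangle^k m\subseteq N$ is equivalent to $f^k m\in N$ when $N$ is a submodule.

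For (i), if $f\in\sqrt{P:M}$ then by definition there is some $k$ with $f^k M\subseteq P$, so every $m\in M$ lies in $P:\langle f\rangle^\infty$. For (ii), suppose $f\notin\sqrt{P:M}$ and $f^k m\in P$. I would argue by downward induction on the exponent using the primary property: writing $f\cdot(f^{k-1}m)\in P$, the hypothesis rules out $f\in\sqrt{P:M}$, hence $f^{k-1}m\in P$; iterating gives $m\in P$. Thus $P:\langle f\rangle^\infty\subseteq P$, and the reverse inclusion is immediate.

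For (iii), the key preliminary fact is that finite intersections commute with stable quotients, that is,
$$\Bigl(\bigcap_{i=1}^n P_i\Bigr):\langle f\rangle^\infty=\bigcap_{i=1}^n \bigl(P_i:\langle f\rangle^\infty\bigr).$$
The nontrivial direction uses finiteness crucially: if for each $i$ there is some $k_i$ with $f^{k_i}m\in P_i$, then $k=\max k_i$ works uniformly, giving $f^k m\in\bigcap P_i=N$. Once this is established, applying (i) and (ii) to each factor collapses the intersection to $\bigcap_{f\notin p_i}P_i$.

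Part (iv) follows the same template, with the only new ingredient being a Noetherian argument to extend (i) from principal to arbitrary ideals. If $I\subseteq p_i=\sqrt{P_i:M}$, pick a finite generating set $f_1,\ldots,f_r$ of $I$; each satisfies $f_j^{n_j}\in P_i:M$, and then a sufficiently large power of $I$ lies in $P_i:M$, yielding $P_i:I^\infty=M$. If $I\not\subset p_i$, pick $f\in I\setminus p_i$; any $m$ with $I^k m\subseteq P_i$ satisfies $f^k m\in P_i$, and (ii) forces $m\in P_i$. Combining these two cases with the same intersection identity as in (iii) gives $N:I^\infty=\bigcap_{I\not\subset p_i}P_i$. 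The main obstacle is the quantifier bookkeeping in the intersection step, and the Noetherian replacement of a single generator by a finite set in (iv); once finiteness is invoked to bound the exponents uniformly, everything else reduces mechanically to the primary case.
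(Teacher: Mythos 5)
Your proof is correct and complete. Note that the paper offers no proof of this statement at all: it is quoted verbatim as Lemma~1 of the cited reference \cite{grabe}, so there is no internal argument to compare yours against. The route you take is the standard one and hits every point where care is actually needed: the downward induction on the exponent using the primary property for part (ii); the finite-intersection identity $\bigl(\bigcap_i P_i\bigr):\langle f\rangle^\infty=\bigcap_i\bigl(P_i:\langle f\rangle^\infty\bigr)$, whose nontrivial inclusion indeed requires taking a uniform exponent $k=\max_i k_i$ and hence uses finiteness of the decomposition; and, for part (iv), the passage from a principal to an arbitrary ideal via finite generation (legitimate under the paper's standing assumption that $R$ is Noetherian), where $I\subseteq p_i$ gives $I^nM\subseteq P_i$ for large $n$ by a pigeonhole count on the generators, while $I\not\subset p_i$ reduces to part (ii) by choosing $f\in I\setminus p_i$ and observing $f^k\in I^k$. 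Nothing is missing.
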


We can easily show that.

\begin{lem}\label{2}
Let $N$ be $p$-primary submodule of an $R$-module $M$. Then

$$(i) \: \: N: h =N, \:\:{\mathrm if} \: \: h \not \in p$$

$$(ii) \: \: N:h =M, \: \: {\mathrm if} \: \:h \in (N:M).$$
\end{lem}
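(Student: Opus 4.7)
The plan is to prove both parts directly from the definition of a $p$-primary submodule (i.e., $rm \in N$ with $m \notin N$ forces $r \in \sqrt{(N:M)} = p$), since each statement is essentially a one-line unpacking of the definitions.

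For part (i), I would start with the trivial inclusion $N \subseteq N:h$. For the reverse, I would take $m \in N:h$, so $hm \in N$. The primary condition then gives $m \in N$ or $h^k M \subseteq N$ for some $k \geq 1$; the latter forces $h \in \sqrt{(N:M)} = p$, contradicting the hypothesis $h \notin p$. Hence $m \in N$, which gives $N:h \subseteq N$ and therefore equality.

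For part (ii), the assumption $h \in (N:M)$ means $hM \subseteq N$, so $hm \in N$ for every $m \in M$; this says precisely $m \in N:h$ for every $m \in M$, i.e., $M \subseteq N:h$. Combined with the automatic inclusion $N:h \subseteq M$, we conclude $N:h = M$.

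No significant obstacle is expected. The only thing to watch is notational: the statement writes $N:h$ rather than $N:\langle h\rangle$, so I would briefly note that $N:h$ is being used with its standard module-quotient meaning $\{m \in M : hm \in N\}$. One could also deduce (i) as a special case of Lemma \ref{stable}(iii) (taking the primary decomposition to be $N$ itself and observing that $h \notin p$ excludes no components), but the direct proof above is shorter and avoids invoking the stable quotient machinery for a single-component decomposition.
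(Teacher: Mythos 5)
Your proof is correct; the paper itself gives no argument for this lemma (it is introduced with ``We can easily show that''), and your direct unpacking of the definitions of $p$-primary and of the quotient $N:h$ is exactly the intended routine verification.
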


The following theorem is the main result of this section.

\begin{thm}\label{envelope} With the notation in Lemma \ref{1}, $$\langle E_{M}(N) \rangle = N+(\bigcap\limits_{i=1}^{k} p_{i})M+\sum\limits_{\emptyset \neq T \subsetneq S} (\bigcap \limits_{i\in T} p_{i})(\bigcap \limits_{i\in S \setminus T} Q_{i}).$$

\end{thm}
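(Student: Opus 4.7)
The plan is to prove the two inclusions separately. Note first that the right-hand side is manifestly a submodule (being a sum of a submodule and ideal-times-submodule products), so for the inclusion $\langle E_M(N)\rangle \subseteq (\mathrm{RHS})$ it suffices to check that every generator $rm$ of $E_M(N)$ lies in the right-hand side. For the reverse direction I handle each of the three pieces on the right individually.

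For the $(\supseteq)$ inclusion: the containment $N\subseteq \langle E_M(N)\rangle$ is trivial (take $r=1$). For $(\bigcap_{i=1}^k p_i)M\subseteq \langle E_M(N)\rangle$, take $r\in\bigcap p_i$; since $r\in p_i=\sqrt{Q_i:M}$, there exists $k_i$ with $r^{k_i}M\subseteq Q_i$, and $k=\max k_i$ gives $r^{k}M\subseteq\bigcap Q_i=N$, so $r^{k}m\in N$ for every $m\in M$ and hence $rm\in E_M(N)$. The inclusion $(\bigcap_{i\in T}p_i)(\bigcap_{i\in S\setminus T}Q_i)\subseteq \langle E_M(N)\rangle$ for $\emptyset\neq T\subsetneq S$ is exactly Lemma~\ref{1}.

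For the $(\subseteq)$ inclusion, take any generator $rm\in E_M(N)$, so $r^{\ell}m\in N\subseteq Q_i$ for all $i$ and some $\ell\in\mathbb{Z}^{+}$. Because each $Q_i$ is $p_i$-primary and $r^{\ell}m\in Q_i$, either $m\in Q_i$ or $r\in\sqrt{Q_i:M}=p_i$. Define
\[
T=\{\,i\in S:r\in p_i\,\},
\]
so that $m\in Q_i$ for every $i\in S\setminus T$, i.e.\ $m\in\bigcap_{i\in S\setminus T}Q_i$ (with the convention that the empty intersection equals $M$). I then split into three cases: if $T=S$ then $r\in\bigcap_{i=1}^k p_i$ and $rm\in(\bigcap p_i)M$; if $T=\emptyset$ then $m\in\bigcap_{i=1}^k Q_i=N$ and $rm\in N$; otherwise $\emptyset\neq T\subsetneq S$ and $rm\in(\bigcap_{i\in T}p_i)(\bigcap_{i\in S\setminus T}Q_i)$, which is one of the summands on the right.

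The technical heart is the case-splitting argument in the $(\subseteq)$ direction; once the set $T$ is introduced, the three cases mirror exactly the three pieces of the right-hand side, and the conventions about the empty and full intersections ($\bigcap_{i\in\emptyset}Q_i=M$ gives the $(\bigcap p_i)M$ contribution) are what make this dissection match the claimed formula. The earlier stable-quotient machinery in Lemma~\ref{stable} and Lemma~\ref{2} is not strictly needed for this proof, though it supplies an equivalent description $N:\langle r\rangle^{\infty}=\bigcap_{r\notin p_i}Q_i$ that could replace the primary case analysis if preferred.
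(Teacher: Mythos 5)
Your proof is correct and follows essentially the same route as the paper: the $\supseteq$ direction rests on Lemma~\ref{1} together with the easy containments, and the $\subseteq$ direction reduces to generators $rm$ of $E_M(N)$ and partitions the index set according to which $p_i$ contain $r$. The only cosmetic difference is that you use the primary property of each $Q_i$ directly where the paper invokes the stable-quotient computation of Lemma~\ref{stable}; both yield the same trichotomy $T=S$, $T=\emptyset$, $\emptyset\neq T\subsetneq S$.
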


\begin{proof}

Let $m\in \langle E_{M}(N) \rangle$. Then there exist $m_{j}\in M, r_{j}\in R$ such that $$m=r_{1}m_{1}+r_{2}m_{2}+\cdots +r_{t}m_{t}.$$ By the definition of $\langle E_{M}(N) \rangle$, $m_{j}\in N:\langle r_{j} \rangle^{\infty}$ for each $j=1,2,\ldots, t$.

For each $r_{j}$, either $r_{j}\in R \setminus \bigcup\limits_{i=1}^{k} p_{i}$ or there is a maximal proper subset $T$ of $S$ such that $r_{j}\in \bigcap\limits_{i\in T} p_{i}$.

If $r_{j}\in R \setminus \bigcup\limits_{i=1}^{k} p_{i}$, then $ N:\langle r_{j} \rangle^{\infty}=N$ by Lemma \ref{stable}. Hence $m_{j}\in N$ and so $r_{j}m_{j} \in N$.

If $r_{j}\in \bigcap\limits_{i\in T} p_{i}$, then $$N:\langle r_{j} \rangle^{\infty}= \bigcap_{i=1}^{k} (Q_i:\langle r_{j} \rangle^{\infty})= \bigcap\limits_{i\in S \setminus T} Q_{i}$$ by Lemma \ref{stable}. Hence $$r_{j}m_{j}\in (\bigcap \limits_{i\in T} p_{i})(\bigcap \limits_{i\in S \setminus T} Q_{i}).$$

If $r_{j}\in \bigcap\limits_{i=1}^{k} p_{i}=\sqrt{N:M}$, then $r_{j}m_{j} \in \sqrt{N:M}M$.

Thus we can conclude that $$ \langle E_{M}(N) \rangle \subseteq N+(\bigcap\limits_{i=1}^{k} p_{i})M+\sum\limits_{\emptyset \neq T \subsetneq S} (\bigcap \limits_{i\in T} p_{i})(\bigcap \limits_{i\in S \setminus T} Q_{i}).$$

For the other side of the inclusion, Lemma \ref{1} implies that $$\sum\limits_{\emptyset \neq T \subsetneq S} (\bigcap \limits_{i\in T} p_{i})(\bigcap \limits_{i\in S \setminus T} Q_{i})\subseteq \langle E_{M}(N) \rangle.$$ Moreover $N$ and $(\bigcap\limits_{i=1}^{k} p_{i})M=\sqrt{N:M}M$ are clearly in $\langle E_{M}(N) \rangle$.
\end{proof}

\begin{cor}
If $N$ is a $p$-primary submodule, then $$\langle E_{M}(N) \rangle = N+pM.$$
\end{cor}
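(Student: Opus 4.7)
The plan is to deduce this as an immediate specialization of Theorem \ref{envelope} to the case $k=1$. Since $N$ is already $p$-primary, the one-term decomposition $N = Q_1$ with $Q_1 = N$ and $p_1 = p$ is itself a minimal primary decomposition, so the theorem applies with index set $S = \{1\}$.

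I then simply read off each piece of the formula. The intersection $\bigcap_{i=1}^{1} p_i$ equals $p$, so the middle term $(\bigcap_{i=1}^{k} p_i)M$ reduces to $pM$. The indexing set $\{T : \emptyset \neq T \subsetneq S\}$ is empty, because the singleton $S=\{1\}$ has no nonempty proper subsets; hence the trailing sum contributes the zero submodule. What remains is exactly $\langle E_M(N) \rangle = N + pM$, as claimed.

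There is really no obstacle here; the only thing to mind is the convention that an empty sum of submodules equals the zero submodule. As a direct sanity check one can also verify both containments without invoking the theorem. For the inclusion $N + pM \subseteq \langle E_M(N)\rangle$: every $m \in N$ lies in $E_M(N)$ trivially, and for $r \in p = \sqrt{N:M}$ and $m \in M$ some power satisfies $r^{k} M \subseteq N$, so $r^{k} m \in N$ and thus $rm \in E_M(N)$. Conversely, if $rm \in E_M(N)$ with $r^{k} m \in N$, then either $r \notin p$, in which case Lemma \ref{stable}(iii) forces $m \in N$ and hence $rm \in N$, or $r \in p$ and so $rm \in pM$. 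This gives the reverse inclusion and confirms the formula.
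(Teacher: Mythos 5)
Your proof is correct and matches the paper's intent exactly: the corollary is stated without proof precisely because it is the $k=1$ specialization of Theorem \ref{envelope}, where the sum over nonempty proper subsets of a singleton is empty. The extra direct verification is sound (though for a single primary submodule, Lemma \ref{stable}(ii) is the more immediate citation than (iii)) but not needed.
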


Now we will give an application of Theorem \ref{envelope}. The computer algebra system {{\sc Singular} }was used for the computations (see \cite{DGPS}).
\begin{ex}
Let $R=\mathbb{Q}[x,y,z]$ and let $M=R\oplus R\oplus R$. Consider the
\vspace{8pt}
submodule $N= \langle xz\mathbf{e}_{3}-z\mathbf{e}_{1},x^{2}\mathbf{e}_{3},x^{2}y^{3}\mathbf{e}_{1}+x^2y^2z\mathbf{e}_{2}  \rangle$.

Primary decomposition of $N$ is $N=Q_{1}\cap Q_{2}\cap Q_{3}$ where
\vspace{10pt}

\begin{center}
$\begin{gathered}
  {Q_1} = \langle {{\mathbf{e}}_3},z{{\mathbf{e}}_1},y{{\mathbf{e}}_1} + z{{\mathbf{e}}_2},{z^2}{{\mathbf{e}}_2}\rangle {\text{ is }}\langle z\rangle -{\text{primary}}{\text{,}} \hfill \\
  {Q_2} = \langle {{\mathbf{e}}_1},{{\mathbf{e}}_3},{y^2}{{\mathbf{e}}_2}\rangle {\text{ is }}\langle y\rangle -{\text{primary and}} \hfill \\
  {Q_3} = \langle x{{\mathbf{e}}_1},x{{\mathbf{e}}_3} - {{\mathbf{e}}_1},{x^2}{{\mathbf{e}}_2}\rangle {\text{ is }}\langle x\rangle -{\text{primary.}} \hfill \\
\end{gathered} $
\end{center}
By Theorem \ref{envelope},
\vspace{12pt}

$\begin{gathered}
  \langle {E_M}(N)\rangle  = N + ({p_1} \cap {p_2} \cap {p_3})M + {p_1}({Q_2} \cap {Q_3}) + {p_2}({Q_1} \cap {Q_3}) + {p_3}({Q_1} \cap {Q_2}) \\
   + ({p_1} \cap {p_2}){Q_3} + ({p_1} \cap {p_3}){Q_2} + ({p_2} \cap {p_3}){Q_1}. \\
\end{gathered} $
\vspace{8pt}

It is clear that $({p_1} \cap {p_2} \cap {p_3})M=\langle xyz{{\mathbf{e}}_1},xyz{{\mathbf{e}}_2},xyz{{\mathbf{e}}_3}\rangle$. We also get
\vspace{8pt}

\begin{center}
$\begin{gathered}
{p_1}({Q_2} \cap {Q_3})= \langle xz{{\mathbf{e}}_1},xz{{\mathbf{e}}_3}-z{{\mathbf{e}}_1},x^2y^2z{{\mathbf{e}}_2}\rangle \hfill \\
{p_2}({Q_1} \cap {Q_3})= \langle xyz{{\mathbf{e}}_3}-yz{{\mathbf{e}}_1},x^2y{{\mathbf{e}}_3},x^2y^2{{\mathbf{e}}_1}+x^2yz{{\mathbf{e}}_2}\rangle \hfill \\
{p_3}({Q_1} \cap {Q_2}) =  \langle x{{\mathbf{e}}_3},xz{{\mathbf{e}}_1},xy^3{{\mathbf{e}}_1}+xy^2z{{\mathbf{e}}_2}\rangle  \hfill \\
({p_1} \cap {p_2}){Q_3}= \langle xyz{{\mathbf{e}}_1},xyz{{\mathbf{e}}_3}-yz{{\mathbf{e}}_1},x^2yz{{\mathbf{e}}_2}\rangle \hfill \\
({p_1} \cap {p_3}){Q_2}= \langle xz{{\mathbf{e}}_1},xz{{\mathbf{e}}_3},xy^2z{{\mathbf{e}}_2}\rangle \hfill \\
({p_2} \cap {p_3}){Q_1}=  \langle xy{{\mathbf{e}}_3},xyz{{\mathbf{e}}_1},xy^2{{\mathbf{e}}_1}+xyz{{\mathbf{e}}_2},xyz^2{{\mathbf{e}}_2} \rangle  \hfill \\
\end{gathered} $
\end{center}

Thus $$\langle E_{M}(N)\rangle= \langle z{{\mathbf{e}}_1},x{{\mathbf{e}}_3},xyz{{\mathbf{e}}_2},xy^2{{\mathbf{e}}_1} \rangle.  $$

\end{ex}

\begin{cor}\label{3}
If $\langle E_{M}(N) \rangle =N$, then each isolated component of primary decomposition of $N$ must be prime.
\end{cor}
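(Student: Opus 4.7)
The plan is to apply Theorem~\ref{envelope} to the hypothesis $\langle E_M(N)\rangle = N$, which forces every summand on the right-hand side of the displayed formula to lie inside $N$. Fixing an isolated index $j$ and taking $T=\{j\}$ in the sum over $\emptyset\neq T\subsetneq S$ yields in particular the containment
$$p_{j}\cdot\bigcap_{i\neq j}Q_{i}\subseteq N\subseteq Q_{j}.$$
The rest of the proof reduces to upgrading this to $p_{j}M\subseteq Q_{j}$, because any $p_{j}$-primary submodule $Q_{j}$ with $(Q_{j}:M)=p_{j}$ is automatically prime: if $rm\in Q_{j}$ and $m\notin Q_{j}$, primariness gives $r\in p_{j}=(Q_{j}:M)$, so $rM\subseteq Q_{j}$.

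For the upgrade I would exploit that $p_{j}$ is isolated, so it is minimal among $p_{1},\dots,p_{k}$ and therefore $p_{i}\not\subseteq p_{j}$ for every $i\neq j$. Ordinary prime avoidance supplies an element $b_{i}\in p_{i}\setminus p_{j}$ for each such $i$, and setting $b=\prod_{i\neq j}b_{i}$ produces an element of $\bigcap_{i\neq j}p_{i}$ that, by primality of $p_{j}$, remains outside $p_{j}$. Since $p_{i}=\sqrt{Q_{i}:M}$, a sufficiently high power $b^{K}$ satisfies $b^{K}M\subseteq Q_{i}$ for every $i\neq j$, hence $b^{K}M\subseteq\bigcap_{i\neq j}Q_{i}$.

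Now fix arbitrary $a\in p_{j}$ and $m\in M$. The element $b^{K}m$ lies in $\bigcap_{i\neq j}Q_{i}$, so $ab^{K}m\in p_{j}\cdot\bigcap_{i\neq j}Q_{i}\subseteq Q_{j}$. Because $b^{K}\notin p_{j}$ and $Q_{j}$ is $p_{j}$-primary, the primary condition applied to $b^{K}\cdot(am)\in Q_{j}$ forces $am\in Q_{j}$. Varying $a$ and $m$ gives $p_{j}M\subseteq Q_{j}$, i.e., $p_{j}\subseteq(Q_{j}:M)$; the reverse inclusion is automatic from $\sqrt{Q_{j}:M}=p_{j}$, so $(Q_{j}:M)=p_{j}$ and $Q_{j}$ is prime by the reduction above.

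The argument is essentially routine once Theorem~\ref{envelope} is in hand; the only slightly delicate point is recognising that a single summand of the formula, combined with a prime-avoidance multiplier that can be cancelled using the $p_{j}$-primary property of $Q_{j}$, is already enough to yield $p_{j}M\subseteq Q_{j}$. No genuine obstacle is expected beyond the careful bookkeeping needed to justify $b^{K}M\subseteq\bigcap_{i\neq j}Q_{i}$ and the final application of primariness.
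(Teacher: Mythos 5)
Your proof is correct and follows essentially the same strategy as the paper's: both exploit Theorem~\ref{envelope} together with the minimality of the isolated prime $p_j$ to produce a multiplier lying in $\bigcap_{i\neq j}p_i$ but outside $p_j$, which the $p_j$-primary property of $Q_j$ then cancels. The only difference is cosmetic: the paper uses the summand $\bigl(\bigcap_{i}p_i\bigr)M$ and argues by contradiction from a single witness $xm\notin Q_k$, whereas you use the summand $p_j\bigl(\bigcap_{i\neq j}Q_i\bigr)$ (which costs you the extra step of arranging $b^KM\subseteq\bigcap_{i\neq j}Q_i$) and conclude $p_jM\subseteq Q_j$ directly.
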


\begin{proof}
Let $N=Q_{1}\cap Q_{2}\cap \cdots \cap Q_{n}$ with $Q_{i}$'s are $p_{i}$-primary submodules. Let $Q_{k}$ be one of the isolated components of $N$. If $Q_{k}$ were not a prime submodule, then there would be exist $x\in p_{k} \setminus (Q_{k}:M)$. Hence there exists $m\in M$ such that $xm\not \in Q_{k}$. Since $p_{k}$ is an isolated prime, we can find an element $y\in (\bigcap\limits_{j\neq k}p_{j}) \setminus p_{k}$. Then $$xym\in (\bigcap\limits_{j=1}^{n}p_{j})M\subseteq \langle E_{M}(N) \rangle =N\subseteq Q_{k}.$$ Since $Q_{k}$ is $p_{k}$-primary and $xm\not \in Q_{k}$, $y\in p_{k}$ which is a contradiction.
\end{proof}

In general for submodules $N_1$ and $N_2$ of a module $M$, $\langle E_M(N_1 \cap N_2)\rangle \neq \langle E_M(N_1)\rangle \cap \langle E_M(N_2)\rangle$. We would like to give a condition for submodules under which we have the equality.

\begin{defn} A submodule $N$ is called a quasi-$p$-primary submodule
in $M$, if $N$ has a unique isolated prime $p$ (and possibly embedded primes).
\end{defn}

The following proposition is crucial for the computing primary decomposition and is quite useful for our purpose.

\begin{prop}  \cite[Proposition 1]{grabe}
Assume that $L =\{p_1,\ldots,p_k\}$ are the isolated primes of $N$. For $i, j = 1,\ldots,m$ take $f_i \in R$ such that $f_i \in  p_j$ if $i\neq j$, but $f_i \not \in p_i$ and take integers $e_i$ such that $f_{i}^{e_i} N_i \subset N$.

Then:

(i)  $N_i$ is a quasi-$p_i$-primary module in $M$.

(ii) The sets $A_i = Ass(M/Ni)= \{p \in  Ass(M/N) : f_i \not \in p\}$ are pairwise disjoint.

(iii) For $J := \langle f_1,f_2,\ldots,f_k \rangle$  we have $$N = (\bigcap Ni)\cap (N + JM )$$
This is a decomposition of $N$ into quasi-primary components $N_i$ and
a component $N' := N + J M \subset M$ of lower (relative) dimension.
\end{prop}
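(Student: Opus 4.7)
The natural definition is $N_i := N : \langle f_i \rangle^\infty$, the stable quotient of $N$ by the principal ideal $\langle f_i \rangle$. Since $M$ is a finitely generated module over a Noetherian ring, $N_i$ is finitely generated, so the existence of an integer $e_i$ with $f_i^{e_i} N_i \subseteq N$ is automatic. The workhorse throughout is Lemma~\ref{stable}(iv), which identifies $N_i = \bigcap_{f_i \notin p} Q_p$ with $N = \bigcap_{p \in Ass(M/N)} Q_p$ the minimal primary decomposition; consequently the associated primes of $N_i$ in $M$ are exactly $\{p \in Ass(M/N) : f_i \notin p\}$, which is the first equality claimed in (ii).

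For part (i), I would check that $p_i$ is the unique minimal member of $Ass(M/N_i)$. It belongs, since $f_i \notin p_i$. For any other $p \in Ass(M/N_i)$, pick an isolated prime $p_l$ of $N$ with $p_l \subseteq p$; since $f_i \in p_l$ for every $l \neq i$, we are forced to have $l = i$, and hence $p \supseteq p_i$. Part (ii) then closes by the same prime-avoidance argument: any $p \in A_i \cap A_j$ sits above some isolated $p_l$ with $f_i, f_j \notin p_l$, which by the construction of the $f$'s forces $l = i$ and $l = j$.

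For part (iii), the inclusion $N \subseteq (\bigcap N_i) \cap (N + JM)$ is immediate. For the reverse, take $x$ in the right-hand side and write $x = n + y$ with $n \in N$ and $y \in JM$. Applying Lemma~\ref{stable}(iv) to $J$ itself gives $\bigcap N_i = N : J^\infty = \bigcap_{J \not\subseteq p} Q_p$, so $y = x - n$ already lies in every primary component $Q_p$ with $J \not\subseteq p$. What remains is to show $y \in Q_p$ for the (necessarily embedded) primes $p \supseteq J$; here I would expand $y$ as a $J$-combination of elements of $M$ and use the relations $f_i^{e_i} N_i \subseteq N \subseteq Q_p$ together with the $p$-primariness of $Q_p$ to push each summand into $Q_p$.

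The main obstacle is precisely this last step of (iii). The containment $J \subseteq p$ does not by itself give $JM \subseteq Q_p$, since $p$-primariness only yields $p^s M \subseteq Q_p$ for some $s$ that may exceed $1$. The tight control has to come from the specific exponents $e_i$, effectively replacing each $f_i$ by a sufficiently high power before combining. This is also what underlies the ``lower relative dimension'' remark at the end of the statement: the residual component $N' = N + JM$ has strictly smaller embedded structure than $N$, so iterating the construction on $N'$ terminates.
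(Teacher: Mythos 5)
This proposition is imported verbatim from \cite[Proposition 1]{grabe}; the paper supplies no proof of it (and its transcription appears to contain a slip: the ideal $J$ in part (iii) should be generated by the powers $f_i^{e_i}$, not by the $f_i$ themselves). So there is nothing in the paper to compare against, and your argument must stand on its own. Your treatment of (i) and (ii) does: identifying $N_i=N:\langle f_i\rangle^{\infty}$, reading off $Ass(M/N_i)=\{p\in Ass(M/N): f_i\notin p\}$ from Lemma~\ref{stable} (together with the easy check that the sub-decomposition $\bigcap_{f_i\notin p}Q_p$ is still irredundant), and then using that every associated prime contains an isolated one plus the defining property of the $f_i$ is exactly right.

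Part (iii), however, contains a genuine gap, which you acknowledge, and it is not a routine verification. First, the reduction you set up cannot be closed for the statement as literally written: with $J=\langle f_1,\dots,f_k\rangle$ the identity is false. Take $R=M=\mathbb{Q}[x,y]$, $N=(x^2y,xy^2)=(x)\cap(y)\cap(x,y)^3$, $f_1=y$, $f_2=x$; then $N_1=(x)$, $N_2=(y)$, $e_1=e_2=2$, and $(N_1\cap N_2)\cap(N+JM)=(xy)\cap(x,y)=(xy)\supsetneq N$. The exponents must enter the ideal, $J=\langle f_1^{e_1},\dots,f_k^{e_k}\rangle$, and in this example the identity then does hold. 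Second, your proposed repair --- expanding $y=\sum_i f_i m_i$ and pushing each summand into $Q_p$ using $f_i^{e_i}N_i\subseteq N$ --- cannot work, because the coefficients $m_i$ are arbitrary elements of $M$, not elements of $N_i$, so that hypothesis never applies to them. The mechanism that actually closes the argument is the one-generator splitting: if $N:f^{e}=N:f^{2e}$ (equivalently $f^{e}(N:f^{\infty})\subseteq N$), then $N=(N:f^{e})\cap(N+f^{e}M)$, proved by solving for the single coefficient $m$ in $u=n+f^{e}m$ and using stabilization to pass from $N:f^{2e}$ back to $N:f^{e}$. Its naive several-generator analogue $N=(N:I^{\infty})\cap(N+IM)$ for an ideal $I$ with $I(N:I^{\infty})\subseteq N$ is false (the same example with $I=(x,y)$ refutes it), so (iii) cannot be finished by the component-by-component chase you outline; one must exploit the single-generator structure, for instance by splitting off the $f_i^{e_i}$ one at a time.
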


\begin{thm}\label{onlyquasi}
Assume that $L =\{p_1,\ldots,p_k\}$ are the isolated primes of $N$ and the minimal primary decomposition of $N$ contains only quasi-primary components $N_i$ for $i=1,\ldots,k$. If $ \langle E_M(N) \rangle=N$, then $\langle E_M(N_i)\rangle=N_i$ for each quasi-primary component $N_i$. Hence $$\langle E_M(N)\rangle= \langle E_M \big( \bigcap_{i=1}^k (N_i)\big)\rangle=\bigcap_{i=1}^k \langle E_M(N_i)\rangle.$$
\end{thm}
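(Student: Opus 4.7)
The plan is to first establish the pointwise claim $\langle E_M(N_i)\rangle = N_i$ for each quasi-primary component; once this is in hand the displayed identity is immediate, since the only-quasi-primary assumption forces $N = \bigcap_{i=1}^k N_i$, so
$\bigcap_{i=1}^k \langle E_M(N_i)\rangle = \bigcap_{i=1}^k N_i = N = \langle E_M(N)\rangle$, and hence also $\langle E_M(\bigcap_i N_i)\rangle = \bigcap_i \langle E_M(N_i)\rangle$.

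For the pointwise claim, the inclusion $N_i \subseteq \langle E_M(N_i)\rangle$ is automatic, so I would fix $i$ and take an arbitrary generator $rm \in E_M(N_i)$, meaning $r^\ell m \in N_i$ for some $\ell \geq 1$, with the goal of showing $rm \in N_i$. The strategy is to transfer the envelope condition from $N_i$ up to $N$, where the hypothesis $\langle E_M(N)\rangle = N$ is directly usable. From the preceding proposition of Gr\"abe, select $f_i \in R$ with $f_i \in p_j$ for every $j \neq i$, $f_i \notin p_i$, together with an integer $e_i$ satisfying $f_i^{e_i} N_i \subseteq N$. Setting $n = \max(\ell, e_i)$, a direct manipulation gives $(f_i r)^n m = f_i^n r^{n-\ell}(r^\ell m) \in f_i^n N_i \subseteq f_i^{n-e_i}(f_i^{e_i} N_i) \subseteq N$, so $f_i r \cdot m \in E_M(N) \subseteq \langle E_M(N)\rangle = N \subseteq N_i$; that is, $f_i(rm) \in N_i$.

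The main obstacle is then the cancellation of $f_i$. The key observation is that $f_i$ is a non-zero-divisor on $M/N_i$: by part (ii) of the quoted proposition, $Ass(M/N_i) = \{p \in Ass(M/N) : f_i \notin p\}$, so no associated prime of $N_i$ contains $f_i$, while the zero-divisors on $M/N_i$ are exhausted by the union of those associated primes. Consequently $f_i(rm) \in N_i$ forces $rm \in N_i$, which yields $E_M(N_i) \subseteq N_i$ and hence $\langle E_M(N_i)\rangle = N_i$. Identifying this regularity of $f_i$ on $M/N_i$ is the crucial step; once it is noticed, everything else is a short algebraic manipulation, and the final chain of equalities drops out as described above.
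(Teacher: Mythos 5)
Your proof is correct, and it takes a genuinely different route from the paper's. The paper proves $\langle E_M(N_i)\rangle=N_i$ by expanding both $\langle E_M(N)\rangle$ and $\langle E_M(N_i)\rangle$ via the explicit formula of Theorem~\ref{envelope} and checking, term by term, that each summand $p_i M$ and $\bigl(\bigcap_{r\in T}p_{i_r}\bigr)\bigl(\bigcap_{r\in S_i\setminus T}Q_{i_r}\bigr)$ lands inside $N_i$: one multiplies by an auxiliary $y\in\bigl(\bigcap_{j\neq i}p_j\bigr)\setminus\bigl(\bigcup_t p_{i_t}\bigr)$ so as to fall into a summand of the formula for $\langle E_M(N)\rangle=N\subseteq N_i$, and then cancels $y$ using that each $Q_{i_t}$ is $p_{i_t}$-primary. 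You instead work element-wise from the definition of the envelope: given $r^{\ell}m\in N_i$, you use the data $f_i^{e_i}N_i\subseteq N$ from Gr\"abe's proposition to get $(f_ir)^n m\in N$, conclude $f_i(rm)\in\langle E_M(N)\rangle=N\subseteq N_i$, and cancel $f_i$ because it avoids every prime in $Ass(M/N_i)$. The two arguments share the same engine --- an element lying in all the other isolated primes but in no associated prime of $N_i$, used to transfer membership into $N$ and then cancelled by primary-ness/regularity --- but yours bypasses Theorem~\ref{envelope} entirely, is shorter, and avoids the notational tangles in the paper's term-by-term verification; the paper's version has the virtue of staying inside its own computational framework. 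One point worth making explicit in your write-up: you need $f_i\notin p$ for \emph{every} $p\in Ass(M/N_i)$, including embedded primes over $p_i$. This is automatic if $N_i$ is taken to be $N:\langle f_i\rangle^{\infty}$ as in Gr\"abe's proposition (part (ii) is then tautological), and if the $N_i$ are instead fixed in advance as the groupings of the primary components over each isolated prime, prime avoidance shows $f_i$ can always be chosen to satisfy this; either remark closes the loop, and the cancellation via regularity of $f_i$ on $M/N_i$ then stands.
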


\begin{proof}
For a fixed $i$, let $Ass(M/N_i)=\{p_{i_1}=p_i,p_{i_2}, \ldots,p_{i_{s_i}}\}$ and $p_i \subseteq p_{i_k}$ for every $k$ and let $N_i=Q_{i_1} \cap \cdots \cap Q_{i_{s_i}}$ where each $Q_{i_k}$ is $p_{i_k}$-primary. By the Theorem \ref{envelope},
$$
\langle E_M(N) \rangle= N+ (\bigcap_{i=1}^k p_i)M+\sum_{\emptyset \neq T \varsubsetneq S} \big(\bigcap_{j \in T}p_{i_j} \big)\big (\bigcap_{j \in S\setminus T} Q_{i_j}\big )
$$
and
$$
\langle E_M(N_i)\rangle =N_i+p_iM+\sum_{T\varsubsetneq S_i}\big(\bigcap_{r \in T}p_{i_r} \big)\setminus \big(\bigcap_{r \in S_i\setminus T}Q_{i_r} \rangle
$$
where $S_i=\{i_1,i_2,\ldots,i_{s_i}\}$ and $S=\bigcup_{i=1}^k S_i$.

Let $x \in p_i$ and $m \in M$. Take $y=\big(\bigcap_{j \neq i}p_j \big)\setminus \big(\bigcup_{t=2}^{s_i}p_{i_t}\big)$. Then
$$
yxm \in \big(\bigcap_{j=1}^kp_j \big)M \subseteq \langle E_M(N) \rangle \subseteq Q_{i_t}$$ for $t=1,\ldots,s_i$. Since $Q_{i_t}$ is primary and $y \not \in p_{i_t}$, $xm \in Q_{i_t}$. Hence $xm \in N_i$.

Now let $x \in \bigcap_{r \in T}, m \in \bigcap_{S_i \setminus T}Q_{i_r}$ for some $T\varsubsetneq S_i$.  Take $$y=\big(\bigcap_{j \neq i}p_j \big)\setminus \big(\bigcup_{t=2}^{s_i}p_{i_t}\big).$$ Then
$$
yxm \in \bigg[ \big(\bigcap_{ j \neq i}p_j \big) \cap \big(\bigcap_{r \in T}p_{i_t} \big) \bigg]\big(\bigcap_{r\in S_i\setminus T}Q_{i_r}\big)
.$$
Since $$ \bigcap_{j \neq i}p_j = \bigcap_{ j\neq i} \bigcap_{t=1}^{s_j}p_{j_t},$$
$$
\bigg[ \big(\bigcap_{ j \neq i}p_j \big) \cap \big(\bigcap_{r \in T}p_{i_t} \big) \bigg]\big(\bigcap_{r\in S_i\setminus T}Q_{i_r}\big)\subseteq \langle E_M(N)\rangle \subseteq N_i.
$$ 
Thus $yxm \in Q_{i_t}$ for $t=1,\ldots,s_i$. Since $Q_{i_t}$ is primary and $y \not \in p_{i_t}$, $xm \in Q_{i_t}$ and hence $xm \in N_i$. Therefore $\langle E_M(N_i)\rangle=N_i$ and the conclusion easily follows. 
\end{proof}



\section{Weakly Prime Submodules}
In this section we investigate the relations between weakly prime submodules and their envelopes.
\begin{lem}\label{weakenv}
If $N$ is a weakly prime submodule, then $\langle E_{M}(N) \rangle =N$.
\end{lem}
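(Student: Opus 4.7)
The plan is to prove the non-trivial inclusion $\langle E_M(N)\rangle \subseteq N$; the reverse inclusion $N \subseteq \langle E_M(N)\rangle$ holds for every submodule (take $r=1$ in the definition of $E_M(N)$), so it is free. Moreover, since $N$ is itself a submodule, it suffices to show the pointwise inclusion $E_M(N)\subseteq N$, because then the submodule generated by $E_M(N)$ is contained in $N$.

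So I would take a typical element $rm\in E_M(N)$, meaning there is some $k\in\mathbb{Z}^{+}$ with $r^{k}m\in N$, and prove $rm\in N$ by induction on the exponent $k$. The base case $k=1$ is immediate. For the inductive step with $k\geq 2$, I would factor $r^{k}m = r\cdot r^{k-1}\cdot m$ and apply the weakly prime hypothesis to the triple $(a,b,m)=(r,r^{k-1},m)$: since $abm=r^{k}m\in N$, we conclude that either $am=rm\in N$ (which closes the argument) or $bm=r^{k-1}m\in N$ (which reduces the exponent and lets the inductive hypothesis finish the job).

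There is no real obstacle here; the lemma is essentially a direct unwinding of the weakly prime definition combined with a one-line induction. The only subtlety worth flagging is that one must use the weakly prime definition in the slightly non-symmetric form $a\cdot b\cdot m$ with $a=r$ and $b=r^{k-1}$ (rather than, say, bringing all the powers of $r$ into a single scalar), because the definition only allows splitting a product into two factors acting on the same element $m\in M$; iterating this splitting gives the induction. Once $E_M(N)\subseteq N$ is established, the equality $\langle E_M(N)\rangle = N$ follows at once.
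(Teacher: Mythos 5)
Your proposal is correct and follows essentially the same route as the paper: both reduce to showing $E_M(N)\subseteq N$ and handle a typical element $rm$ with $r^km\in N$ by splitting $r^k m = r\cdot r^{k-1}\cdot m$, applying the weakly prime condition, and descending on the exponent. Your observation that the pointwise inclusion suffices (since $N$ is a submodule) is a minor tidying of the paper's term-by-term argument, not a different method.
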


\emph{Proof.} Let $x\in \langle E_{M}(N) \rangle$. Then there exist elements $r_{i}\in R$ and $m_{i}\in M$
$(1\leq i \leq k)$ such that
\begin{eqnarray*}
       x= r_{1}m_{1}+\cdots +r_{k}m_{k}& & with \quad  r_{i}^{t_{i}}m_{i}\in N
     \end{eqnarray*}
for some $t_{i}\in \mathbb{Z}^{+}$. Since $N$ is weakly prime, $r_{i}^{t_{i}}m_{i}\in N$ implies that $r_{i}m_{i}\in N$ or $r_{i}^{t_{i}-1}m_{i}\in N$. If $r_{i}m_{i}\in N$, then $ x= r_{1}m_{1}+\cdots +r_{k}m_{k} \in N$. If $r_{i}^{t_{i}-1}m_{i}\in N$, then $r_{i}m_{i}\in N$ or $r_{i}^{t_{i}-2}m_{i}\in N$. By the same process, $r_{i}m_{i}\in N$ for all cases. Hence $x\in N$, which means that $\langle E_{M}(N) \rangle \subseteq N$. Other side of the inclusion is obvious. $\Box$

\begin{thm} \label{wprime}
  Suppose that $N=Q_{1}\cap Q_{2}\cap \cdots \cap Q_{s}$ where each $Q_{i}$ is $p_{i}$-primary submodule with $p_{1} \subset p_{2}\subset \cdots \subset p_{s}$. If $E_M(N)=N$, then $N$ is a weakly prime submodule.
\end{thm}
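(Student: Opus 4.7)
The plan is to argue by contradiction: assume $abm \in N$ with $am \notin N$ and $bm \notin N$, and use Theorem \ref{envelope} together with the chain $p_1 \subset p_2 \subset \cdots \subset p_s$ to force $bm$ (or by symmetry $am$) into $N$.

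The first step is to extract the consequences of $\langle E_M(N)\rangle = N$. Theorem \ref{envelope} writes $\langle E_M(N)\rangle$ as the sum $N+(\bigcap_{i} p_i)M+\sum_{\emptyset \neq T\subsetneq S}(\bigcap_{i\in T} p_i)(\bigcap_{i\in S\setminus T}Q_i)$, so every summand must already lie in $N$. Because the $p_i$'s form a chain, $\bigcap_{i=1}^{s} p_i = p_1$ and $\bigcap_{i\in T}p_i = p_{\min T}$ for any $T$, so the hypothesis collapses to the two containments $p_1 M \subseteq N$ and $p_{\min T}\bigl(\bigcap_{i \in S\setminus T} Q_i\bigr) \subseteq N$ for every $\emptyset \neq T \subsetneq S$.

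Next I define $i_0 := \min\{i : b \in p_i\}$ and $i_1 := \min\{i : a \in p_i\}$. These indices exist: if $b \notin p_i$ for all $i$, then applying the primary property of $Q_i$ to $b\cdot(am)\in Q_i$ gives $am \in Q_i$ for every $i$, hence $am \in N$, contradicting our assumption; likewise for $i_1$. By the chain, $b \in p_i$ exactly for $i \geq i_0$. Swapping $a$ and $b$ if necessary, I may assume $i_0 \leq i_1$; then for every $i < i_0$ we have $a,b \notin p_i$, so $(ab)m \in Q_i$ with $ab \notin p_i$ forces $m \in Q_i$ by the primary property, whence $m \in Q_1 \cap \cdots \cap Q_{i_0 - 1}$.

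The conclusion splits into two sub-cases. If $i_0 = 1$, then $b \in p_1$ immediately gives $bm \in p_1 M \subseteq N$, contradicting $bm \notin N$. If $i_0 > 1$, set $T=\{i_0,i_0+1,\ldots,s\}$; this is a nonempty proper subset of $S$ with $S\setminus T = \{1,\ldots,i_0-1\}$, and combining $b\in p_{i_0} = \bigcap_{i \in T}p_i$ with $m \in \bigcap_{i \in S\setminus T} Q_i$ yields $bm \in p_{i_0}\bigl(\bigcap_{i\in S\setminus T}Q_i\bigr)\subseteq N$, again contradicting $bm \notin N$. The main subtlety I anticipate is the edge case $i_0 = 1$, which is exactly why the separate summand $p_1 M$ must appear in Theorem \ref{envelope}; without the chain hypothesis on the $p_i$'s, the generic summand $(\bigcap_{i\in T}p_i)(\bigcap_{i\in S\setminus T}Q_i)$ would not reduce to the clean form $p_{\min T}(\bigcap_{i\in S\setminus T}Q_i)$ that is used to absorb $bm$.
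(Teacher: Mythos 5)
Your proof is correct and rests on exactly the same mechanism as the paper's: under the chain hypothesis, Theorem \ref{envelope} reduces $\langle E_M(N)\rangle=N$ to the containments $p_1M\subseteq N$ and $p_i(Q_1\cap\cdots\cap Q_{i-1})\subseteq N$, which are then used together with the primary property of the $Q_i$ to push $am$ or $bm$ into $N$. The only difference is bookkeeping — you argue by contradiction and pivot on the first index with $b\in p_{i_0}$, while the paper argues directly and pivots on the first index with $m\notin Q_i$ — and your version is, if anything, slightly more explicit about the edge cases.
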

\emph{Proof.} Since $p_{1} \subset p_{2}\subset \cdots \subset p_{s}$, by the Theorem~\ref{envelope}
$$N=\langle E_{M}(N) \rangle =N+p_{1}M+\sum \limits _{i=2}^{s} p_{i}(\bigcap \limits _{j=1}^{i-1} Q_{j}).$$
Let $abm\in N$ with $a,b\in R$ and $m\in M$. Let $i$ be the first index for which $m \not \in Q_i$. Since $Q_i$ is $p_i$-primary, $ab \in p_i$ and so either $a \in p_i$ or $b \in p_i$. If $i=1$, then since $p_1 M \subset \langle E_M(N)\rangle =N$, either $am \in N$ or $bm \in N$. Let $i>1$. Since $p_{i}(\bigcap _{j=1}^{i-1} Q_{j}) \subset \langle E_M(N)\rangle =N$, either $am \in N$ or $bm \in N$. Hence $N$ is a weakly prime submodule. $\Box$

 The following conjecture is stated in \cite{behboodi}: Let $R$ be a ring and $M$ be an $R$-module. Then for every weakly primary submodule $Q$ of $M$, $\langle E_{M}(Q) \rangle$ is a weakly prime submodule. Notice that they use the notation $\sqrt[nil]{Q}$ for $\langle E_{M}(Q) \rangle$ in \cite{behboodi}.

The following example shows that the conjecture is false.

\begin{ex}
Let $R=\mathbb{Q}[x,y]$ and let $M=R\oplus R$. Consider the
\vspace{8pt}
submodule $N= \langle x\mathbf{e}_{1}+y^3\mathbf{e}_{2},x^{2}\mathbf{e}_{1},x\mathbf{e}_{2} \rangle$.
One can easily see that $(N:M)= \langle x^2 \rangle$ and $N$ is $\langle x \rangle$-primary submodule. Hence
$$\langle E_M(N) \rangle = N+ \langle x \rangle M= \langle  x\mathbf{e}_{1}, x\mathbf{e}_{2}, y^3 \mathbf{e}_{2} \rangle$$
Then $\langle E_M(N) \rangle$ is not weakly prime submodule since $y^2 (0,y)=(0,y^3) \in \langle E_M(N)\rangle $ but $y (0,y)=(0,y^2) \not \in \langle E_M(N)\rangle $.
\end{ex}

If we weaken the conditions of the conjecture as follows, then we can obtain the desired result.
\begin{cor}\label{wp}
Let $R$ be a Noetherian ring and $M$ be a finitely generated $R$-module. Then for every weakly primary submodule $Q$ of $M$; if $\langle E_{M}(Q) \rangle=Q$, then $Q$ is weakly prime.
\end{cor}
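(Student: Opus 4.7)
The plan is to unwind the definitions directly. Suppose $Q$ is a weakly primary submodule of $M$ with $\langle E_M(Q)\rangle = Q$, and take arbitrary $a,b \in R$ and $m \in M$ satisfying $abm \in Q$. I must show that $am \in Q$ or $bm \in Q$.

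First I apply the weakly primary hypothesis: from $abm \in Q$ it follows that either $bm \in Q$, in which case one of the two required conclusions already holds, or there exists a positive integer $k$ with $a^{k} m \in Q$. In the latter case, the pair $(a,m)$ satisfies exactly the defining condition of the envelope ($r \in R$, $m' \in M$, and $r^{k} m' \in Q$ with $(r,m')=(a,m)$), so $am \in E_M(Q) \subseteq \langle E_M(Q)\rangle = Q$. Either way $am \in Q$ or $bm \in Q$, which is precisely the weakly prime condition.

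There is no real obstacle here: once the assumption $\langle E_M(Q)\rangle = Q$ is available, the envelope absorbs the power occurring in the weakly primary definition and upgrades it to the weakly prime conclusion. In fact, the Noetherian and finitely generated hypotheses stated in the corollary are not actually used in the argument; they are inherited from the standing assumptions of Section~1 rather than being essential. The only point requiring care is to respect the asymmetry of the weakly primary definition (only $a$ is allowed a power, not $b$), so that the element extracted from the envelope is exactly $am$ and therefore matches the correct side of the weakly prime disjunction.
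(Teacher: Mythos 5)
Your proof is correct, and it is genuinely different from — and considerably more elementary than — the one in the paper. The paper's argument takes a primary decomposition $Q=Q_1\cap\cdots\cap Q_k$ (which is where the Noetherian and finitely generated hypotheses enter), invokes \cite[Proposition 3.1]{behboodi} to get that the associated primes form a chain $p_1\subset\cdots\subset p_k$, and then applies Theorem~\ref{wprime}, which deduces weak primeness from the chain condition together with $\langle E_M(Q)\rangle=Q$ via the explicit formula of Theorem~\ref{envelope}. Your argument bypasses all of this machinery: from $abm\in Q$ the weakly primary condition gives $bm\in Q$ or $a^km\in Q$, and in the second case the pair $(a,m)$ witnesses $am\in E_M(Q)\subseteq\langle E_M(Q)\rangle=Q$, so the envelope hypothesis directly upgrades the power to the weakly prime conclusion. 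You are also right that the asymmetry of the weakly primary definition (the power sits on $a$, not $b$) is exactly what makes the extracted element $am$ land on the correct side of the disjunction. What your route buys is generality and transparency: it works over an arbitrary commutative ring and arbitrary module, needs only $E_M(Q)\subseteq Q$ rather than the full structure theory, and exposes the corollary as nearly tautological. What the paper's route buys is coherence with its surrounding results — it reuses Theorem~\ref{wprime} and the chain structure of $Ass(M/Q)$, which are themselves needed elsewhere (e.g.\ in Lemma~\ref{quasi} and the weakly radical computations) — but for this particular statement your direct argument is strictly preferable.
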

\emph{Proof.}  Let $Q=Q_{1}\cap Q_{2}\cap \cdots \cap Q_{k}$ be primary decomposition of $Q$ with $\sqrt{Q_{i}:M}=p_{i}$ $(1\leq i\leq k)$. By \cite[Proposition 3.1]{behboodi}, $p_{1}\subset p_{2}\subset \cdots \subset p_{k}$. Then Theorem \ref{wprime} implies that $Q$ is weakly prime submodule. $\Box$
\begin{cor}
 Let $N=Q_{1}\cap Q_{2}$ be a submodule of $M$ where $Q_{i}$ is $p_{i}$-primary. If $\langle E_{M}(N) \rangle=N$, then either $Q_{1}$ and $Q_{2}$ are both prime or $N$ is weakly prime.
\end{cor}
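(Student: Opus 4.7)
The plan is to split on the position of the two primes $p_1$ and $p_2$ relative to each other under inclusion, and to invoke the two principal tools already established in the excerpt, namely Corollary~\ref{3} and Theorem~\ref{wprime}. Each case lands squarely in one of the two alternatives of the conclusion.

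First I would handle the incomparable case, where neither of $p_1, p_2$ contains the other. Then both primes are minimal elements of $Ass(M/N) = \{p_1, p_2\}$, so $Q_1$ and $Q_2$ are both isolated components of the primary decomposition of $N$. Corollary~\ref{3}, which under the standing hypothesis $\langle E_M(N)\rangle = N$ forces every isolated component to be prime, then immediately delivers that $Q_1$ and $Q_2$ are both prime. This is the first alternative.

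In the comparable case I may relabel so that $p_1 \subseteq p_2$. When the inclusion is strict, Theorem~\ref{wprime} applies verbatim with $s = 2$ and yields that $N$ is weakly prime. In the degenerate sub-case $p_1 = p_2$, the intersection $N = Q_1 \cap Q_2$ is itself $p_1$-primary, so the corollary to Theorem~\ref{envelope} rewrites the hypothesis as $N = N + p_1 M$, i.e.\ $p_1 M \subseteq N$; together with the always-valid inclusion $(N:M) \subseteq \sqrt{(N:M)} = p_1$ this gives $(N:M) = p_1$, so $N$ is prime and hence weakly prime.

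The main obstacle I foresee is almost entirely bookkeeping rather than conceptual: one must take care that the borderline case $p_1 = p_2$, in which the individual $Q_i$ need not be prime even though their intersection is, is correctly attributed to the ``weakly prime'' alternative and not to the ``both components prime'' one. Once the case split is stated cleanly, each branch is a one-line application of a previously proved result, and no new calculation beyond that in the corollary to Theorem~\ref{envelope} is needed.
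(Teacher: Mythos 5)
Your proof is correct and follows essentially the same route as the paper: Corollary~\ref{3} when the two primes are incomparable (both components isolated), and Theorem~\ref{wprime} when they form a chain. Your version is in fact slightly more careful than the paper's, which splits on ``$p_1\nsubseteq p_2$ or $p_1\subseteq p_2$'' and thus glosses over the case $p_2\subsetneq p_1$ (where only $Q_2$ is isolated and one must relabel and use Theorem~\ref{wprime}), as well as the degenerate case $p_1=p_2$, which you dispatch correctly.
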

\emph{Proof.} We have two cases: $p_{1}\nsubseteq p_{2}$ or $p_{1}\subseteq p_{2}$. If $p_{1}\nsubseteq p_{2}$, then both $p_{1}$ and $p_{2}$ are isolated primes. From  Corollary \ref{3}, $Q_{1}$ and $Q_{2}$ are prime submodules. If $p_{1}\subseteq p_{2}$, then  Theorem~\ref{wprime} implies that $N$ is weakly prime.  $\Box$

\begin{lem}\label{quasi}
If $N$ is a quasi-$p_1$-primary submodule and $\langle E_M(N)\rangle=N$, then $N$ can be expressed as an intersection of finitely many weakly prime submodules containing $N$.
\end{lem}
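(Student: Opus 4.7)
The plan is to produce one weakly prime submodule $W_{\mathcal{C}}$ per maximal chain $\mathcal{C}$ of associated primes of $M/N$ starting at $p_1$, and show that these finitely many submodules intersect to give $N$. Write $N = Q_1 \cap Q_2 \cap \cdots \cap Q_k$ for a minimal primary decomposition with $Q_j$ being $p_j$-primary; since $N$ is quasi-$p_1$-primary, $p_1 \subsetneq p_j$ for every $j \geq 2$. For a maximal chain $\mathcal{C} : p_1 = p_{i_0} \subsetneq p_{i_1} \subsetneq \cdots \subsetneq p_{i_r}$ among the associated primes, set $W_{\mathcal{C}} = Q_{i_0} \cap Q_{i_1} \cap \cdots \cap Q_{i_r}$. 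There are only finitely many such chains; every $p_j$ belongs to some maximal chain starting at $p_1$ (extend the two-term chain $p_1 \subsetneq p_j$), so every $Q_j$ appears in at least one $W_{\mathcal{C}}$, giving $\bigcap_{\mathcal{C}} W_{\mathcal{C}} = \bigcap_{j=1}^k Q_j = N$. The inclusion $N \subseteq W_{\mathcal{C}}$ is obvious.

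It remains to verify that each $W_{\mathcal{C}}$ is weakly prime. The associated primes appearing in the displayed decomposition of $W_{\mathcal{C}}$ form a chain, so by Theorem \ref{wprime} it suffices to establish $\langle E_M(W_{\mathcal{C}})\rangle = W_{\mathcal{C}}$. Rewriting Theorem \ref{envelope} for a chain (as in the proof of Theorem \ref{wprime}) reduces this to showing
$$
p_1 M + \sum_{t=1}^r p_{i_t}\Bigl(\bigcap_{s<t} Q_{i_s}\Bigr) \subseteq W_{\mathcal{C}}.
$$
The term $p_1 M$ is contained in $N \subseteq W_{\mathcal{C}}$ because $\langle E_M(N)\rangle = N$; each summand $p_{i_t}(\bigcap_{s<t} Q_{i_s})$ sits inside $Q_{i_u}$ trivially for $u<t$, so the heart of the argument is to verify it for $u \geq t$.

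Fix $a \in p_{i_t}$, $m \in \bigcap_{s<t} Q_{i_s}$, and an index $u \geq t$. Applying Theorem \ref{envelope} to $N$ with $T = \{j \in S : p_j \supseteq p_{i_t}\}$ yields $\bigcap_{j \in T} p_j = p_{i_t}$ and hence $p_{i_t} \cdot \bigcap_{j \in S \setminus T} Q_j \subseteq N \subseteq Q_{i_u}$. The idea is to multiply $m$ by a power of an auxiliary element $b$ so that $b^K m$ lies in $\bigcap_{j \in S \setminus T} Q_j$ (this requires $b \in p_j$ for every $j \in (S \setminus T)\setminus\{i_0,\dots,i_{t-1}\}$), and then to use the primary property of $Q_{i_u}$, together with $b \notin p_{i_u}$, to cancel $b^K$ from the resulting relation $a b^K m \in Q_{i_u}$. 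Such an element $b$ is produced by prime avoidance applied to the intersection of the required $p_j$'s against $p_{i_u}$.

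The main obstacle is justifying this last prime-avoidance step, namely that no prime $p_j$ outside the chain $\mathcal{C}$ with $p_j \not\supseteq p_{i_t}$ can be contained in $p_{i_u}$. This is where the maximality of $\mathcal{C}$ is used together with the hypothesis $\langle E_M(N)\rangle = N$: were such a $p_j$ to exist, either $p_j$ could be inserted into $\mathcal{C}$ (contradicting maximality), or a case analysis based on the envelope relations for $N$ forces $Q_{i_u}$ to contain the intersection of the remaining primary components, making it redundant and contradicting the minimality of the decomposition. Handling this structural point carefully is the crux of the proof.
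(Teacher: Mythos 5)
Your overall strategy is the same as the paper's: decompose $N$ along chains of associated primes into pieces $W_{\mathcal{C}}$, show each piece satisfies $\langle E_M(W_{\mathcal{C}})\rangle = W_{\mathcal{C}}$ by a ``multiply by an auxiliary element $b$ lying in the off-chain primes but outside the top prime, then cancel $b$ by primariness'' argument, and invoke Theorem~\ref{wprime}. (Your use of \emph{all} maximal chains is actually more careful than the paper's one-chain-per-maximal-prime choice, which does not obviously recover every $Q_j$ in the intersection.) But the step you yourself single out as the crux is a genuine gap, and neither escape route you sketch closes it. Your prime-avoidance step needs: no associated prime $p_j$ with $p_j \not\supseteq p_{i_t}$ and $j \notin \{i_0,\dots,i_{t-1}\}$ is contained in $p_{i_u}$. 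Take the ``diamond'' configuration $p_1 \subsetneq p_2 \subsetneq p_4$ and $p_1 \subsetneq p_3 \subsetneq p_4$ with $p_2,p_3$ incomparable, the maximal chain $\mathcal{C}\colon p_1 \subsetneq p_2 \subsetneq p_4$, $p_{i_t}=p_2$, $p_{i_u}=p_4$. Then $j=3$ violates the condition: $p_3 \not\supseteq p_2$ and $p_3$ is off the chain, yet $p_3 \subseteq p_4$, so every admissible $b$ lies in $p_{i_u}$ and cannot be cancelled. Maximality of $\mathcal{C}$ does not help, because $p_3$ is incomparable to $p_2$ and so could never be inserted into $\mathcal{C}$; its existence contradicts nothing. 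The alternative claim that such a $p_j$ forces $Q_{i_u}$ to be redundant is asserted, not proved. Concretely, the most the hypothesis $\langle E_M(N)\rangle=N$ hands you here (via $T=\{2,3,4\}$ in Theorem~\ref{envelope}) is $(p_2\cap p_3)Q_1 \subseteq Q_4$, and there is no visible way to upgrade this to the needed $p_2 Q_1 \subseteq Q_4$: any $c \in p_3$ you introduce lies in $p_4$ and cannot be removed by the primary property of $Q_4$.

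For what it is worth, the paper's own proof stumbles at exactly the same point: it asserts the existence of $y \in (\bigcap_{p \in B} p)\setminus p_j$ ``since $p_j$ is a maximal prime and associated primes are pairwise distinct,'' which by prime avoidance requires that no off-chain associated prime be contained in $p_j$ --- false in the diamond configuration. So you have faithfully reconstructed the published argument down to its weak point; the difference is that you flag the weak point and leave it open rather than asserting it. As submitted, the proposal is incomplete: you would need either to show that $\langle E_M(N)\rangle = N$ excludes the diamond configuration among the associated primes of a quasi-primary submodule, or to find a different chain-supported decomposition for which the cancellation element genuinely exists.
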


\emph{Proof.}
Let $Ass(M/N)=\{p_1,\ldots,p_s\}$ and $S=\{1,\ldots,s\}$. If $N$ contains only one maximal associated prime with respect to inclusion, then its associated primes form a chain $p_1 \subset \cdots \subset p_s$. Hence $N$ is weakly prime by Theorem~\ref{wprime}.

Suppose that $N$ has more than one maximal element. For each maximal $p_{j}$, we have a unique chain of associated primes $p_1=p_{j_1} \subset p_{j_2}\subset \cdots \subset p_{j_t}=p_j$. Let $N_j=Q_{j_1} \cap Q_{j_2} \cdots \cap Q_{j_t}$ where $Q_{j_1}=Q_1$ and $Q_{j_t}=Q_j$. From Theorem~\ref{envelope},
$$\langle E_{M}(N) \rangle = N+ p_{1}M+\sum\limits_{T\subset S} (\bigcap \limits_{i\in T} p_{i})(\bigcap \limits_{i\in S \setminus T} Q_{i})$$ and
$$\langle E_{M}(N_j) \rangle= N_j+p_{1}M+\sum \limits _{i=2}^{t} p_{j_i}(\bigcap \limits _{k=1}^{i-1} Q_{j_k}). $$ Our aim to show that $\langle E_M(N_j)\rangle=N_j$.
Clearly $p_{1}M \subset \langle E_M(N)\rangle=N \subset N_j$. Let $B=Ass(M/N)\setminus Ass(M/N_j)$. Take $x \in p_{j_i}$ and $ m \in \bigcap_{k=1}^{i-1} Q_{j_k}$. Since $p_j$ is a maximal prime and associated primes pairwise distinct, there exists $ y\in (\bigcap\limits_{p \in B} p)\setminus p_{j}$. Hence $$yxm \in (p_{j_i} \cap (\bigcap\limits_{p \in B} p ) (\bigcap\limits_{k=1}^{i-1} Q_{j_k}) \subset \langle E_M(N)\rangle=N \subset N_j \subset Q_{j_k}.$$ Since each $Q_{j_k}$ is $p_{j_k}$-primary and $ y \not \in p_{j_k}$, $xm \in Q_{j_k}$. Hence $xm \in N_j$. This implies $\langle E_M(N_j)\rangle=N_j$ and $N_j$ is weakly prime by Theorem~\ref{wprime}. Since $N=\cap N_j$, $N$ is intersection of finitely many weakly prime submodules.
$\Box$

Using the previous Lemma and Theorem~\ref{onlyquasi}, we can conclude the following.
\begin{thm}
Assume that $L =\{p_1,\ldots,p_k\}$ are the isolated primes of $N$ and  the minimal primary decomposition of $N$ contains only quasi-primary components $N_i$ for $i=1,\ldots,k$. If $\langle E_M(N)\rangle =N$, then $N$ can be expressed as the intersection of finitely many weakly prime submodules.
\end{thm}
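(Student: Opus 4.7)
The plan is to combine the two results immediately preceding the theorem: Theorem~\ref{onlyquasi}, which decomposes $N$ into quasi-primary pieces each of which is fixed under $\langle E_M(\cdot)\rangle$, and Lemma~\ref{quasi}, which rewrites every such quasi-primary piece as an intersection of weakly prime submodules.

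First I would invoke Theorem~\ref{onlyquasi}. The hypotheses of that theorem are exactly the hypotheses we are given: the minimal primary decomposition of $N$ uses only quasi-primary components $N_1,\ldots,N_k$ attached to the isolated primes $p_1,\ldots,p_k$, and $\langle E_M(N)\rangle=N$. Its conclusion gives both $N=\bigcap_{i=1}^k N_i$ and, more importantly, $\langle E_M(N_i)\rangle=N_i$ for each $i$.

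Next I would apply Lemma~\ref{quasi} to each quasi-$p_i$-primary component $N_i$ individually. Since $N_i$ is quasi-$p_i$-primary and satisfies $\langle E_M(N_i)\rangle=N_i$, the lemma produces a finite collection of weakly prime submodules $W_{i,1},\ldots,W_{i,m_i}$ of $M$, all containing $N_i$, such that $N_i=\bigcap_{j=1}^{m_i} W_{i,j}$. Intersecting over $i$ yields
\[
N=\bigcap_{i=1}^{k} N_i=\bigcap_{i=1}^{k}\bigcap_{j=1}^{m_i} W_{i,j},
\]
which is a representation of $N$ as a finite intersection of weakly prime submodules, as required.

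I do not anticipate a genuine obstacle, since everything follows by assembling the two preceding results; the only point worth double-checking is that the hypothesis $\langle E_M(N)\rangle=N$ really does pass to each quasi-primary component $N_i$, and this is precisely what Theorem~\ref{onlyquasi} was designed to guarantee.
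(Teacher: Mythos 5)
Your proposal is correct and is exactly the argument the paper intends: the paper states the theorem with the remark that it follows from Theorem~\ref{onlyquasi} (which passes $\langle E_M(\cdot)\rangle$-fixedness to each quasi-primary component) together with Lemma~\ref{quasi} (which writes each such component as a finite intersection of weakly prime submodules). You have simply written out the one-line assembly the paper leaves implicit.
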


\begin{defn}
A proper submodule $N$ of an $R$-module $M$ is called semiprime if whenever $r^km \in N$ for some $r \in R,m \in M$ and natural number $k$, then $rm \in N$.
\end{defn}

The question when a semiprime module can be expressed as a finite intersection of weakly prime submodules discussed in \cite {behboodi2}. We have the following contribution to this discussion.

\begin{lem}\label{semiprime}
Let $N$ be a semiprime submodule of an $R$-module $M$. Then $\langle E_{M}(N) \rangle=N$.
\end{lem}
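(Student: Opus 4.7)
The plan is to observe that this is essentially an unwinding of the two definitions, with no primary decomposition machinery required. The inclusion $N \subseteq \langle E_M(N)\rangle$ is trivial (for any $n \in N$, we have $1 \cdot n = n$ and $1^{1}\cdot n \in N$, so $n \in E_M(N)$), so all of the content lies in showing $\langle E_M(N)\rangle \subseteq N$.

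To do this, I would take an arbitrary $x \in \langle E_M(N)\rangle$ and write it as a finite sum $x = r_1 m_1 + \cdots + r_k m_k$ where each generator $r_i m_i$ belongs to $E_M(N)$, i.e. there exist positive integers $t_i$ with $r_i^{t_i} m_i \in N$. The key step is then to apply the semiprime hypothesis directly to each term: since $r_i^{t_i} m_i \in N$, the definition of semiprime (with $r = r_i$, $m = m_i$, $k = t_i$) yields $r_i m_i \in N$. Summing over $i$ gives $x \in N$, completing the inclusion.

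There is really no obstacle here; the only subtle point is making sure the semiprime condition is applied with the correct exponent. Note in particular that we do not need to iterate the semiprime condition (as was done in Lemma~\ref{weakenv} for weakly prime submodules, where the property only lowers the exponent by one): the semiprime hypothesis gives $r_i m_i \in N$ in a single step from $r_i^{t_i}m_i \in N$, regardless of the size of $t_i$. Thus the proof will be very short, consisting essentially of the above two sentences together with the trivial reverse inclusion.
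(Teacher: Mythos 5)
Your proposal is correct and follows essentially the same argument as the paper: take $x = \sum r_i m_i$ with $r_i^{t_i}m_i \in N$, apply the semiprime condition once to each term to get $r_i m_i \in N$, and sum. Your remark that no iteration on the exponent is needed (in contrast to Lemma~\ref{weakenv}) is accurate but the paper's proof is otherwise identical.
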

\emph{Proof.} Let $x\in \langle E_{M}(N) \rangle$. Then there exist elements $r_{i}\in R$, $m_{i}\in M$ $(1 \leq i \leq k)$ such that
\begin{eqnarray*}
       x= r_{1}m_{1}+\cdots +r_{k}m_{k}& & with \quad  r_{i}^{t_{i}}m_{i}\in N
     \end{eqnarray*}
for some $t_{i}\in \mathbb{Z}^{+}$. Since $N$ is semiprime, $r_{i}m_{i}\in N$ for all $i$. Hence $x\in N$ and $\langle E_{M}(N) \rangle=N$.
 $\Box$
\begin{cor}
Let $R$ be a Noetherian ring and $M$ be a finitely generated $R$-module. Each semiprime submodule $N$ of $M$ is intersection of weakly prime submodules, if the primary decomposition of $N$ contains only quasi-primary components for each isolated prime of $N$.
\end{cor}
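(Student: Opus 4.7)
The plan is to obtain this corollary as a direct consequence of the two preceding results: Lemma \ref{semiprime} and the theorem immediately following it (the one asserting that if $\langle E_M(N)\rangle=N$ and the primary decomposition of $N$ consists only of quasi-primary components at each isolated prime, then $N$ is a finite intersection of weakly prime submodules).

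First I would verify the hypothesis of that theorem. Since $R$ is Noetherian and $M$ is finitely generated, $N$ admits a minimal primary decomposition, and by the standing assumption of the corollary this decomposition has only quasi-primary components for each isolated prime of $N$. Moreover, by Lemma \ref{semiprime}, since $N$ is semiprime we have $\langle E_M(N)\rangle=N$. This is precisely the hypothesis of the quasi-primary decomposition theorem.

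Then I would simply invoke that theorem to conclude that $N$ is an intersection of finitely many weakly prime submodules. Since each such submodule contains $N$, this expresses $N$ as an intersection of weakly prime submodules as required.

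Because the entire argument is just the concatenation of two already-proved results, there is no substantive obstacle; the only real content is recognising that ``semiprime'' is strong enough to force $\langle E_M(N)\rangle=N$, which is exactly what Lemma \ref{semiprime} supplies. No additional calculation or construction is required.
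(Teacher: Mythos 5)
Your proposal is correct and is exactly the argument the paper intends: the corollary follows by combining Lemma \ref{semiprime} (semiprime implies $\langle E_M(N)\rangle=N$) with the preceding theorem on quasi-primary decompositions, which is why the paper states it without further proof. No gaps.
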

\

We have also the following result.
\begin{prop}
Let $N$ be a weakly primary submodule of $M$. Then $N$ is semiprime if and only if $N$ is weakly prime.
\end{prop}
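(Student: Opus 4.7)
The plan is to prove each direction separately, since both are short enough to be handled by direct manipulation of the defining conditions.

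For the forward direction, assume $N$ is weakly primary and semiprime, and take $a,b \in R$ and $m \in M$ with $abm \in N$. Apply the weakly primary condition: either $bm \in N$, in which case we are done, or $a^{k}m \in N$ for some $k \geq 1$. In the second case the semiprime hypothesis immediately yields $am \in N$. Either way one of $am$, $bm$ lies in $N$, so $N$ is weakly prime.

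For the converse, assume $N$ is weakly prime and take $r \in R$, $m \in M$, $k \in \mathbb{Z}^{+}$ with $r^{k}m \in N$. I would use induction on $k$ (or equivalently a descent argument identical to the one in the proof of Lemma~\ref{weakenv}): writing $r^{k}m = r \cdot r^{k-1}m \in N$, the weakly prime property gives $rm \in N$ or $r^{k-1}m \in N$; in the latter case repeat. After finitely many steps one obtains $rm \in N$, which is the semiprime condition.

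I do not expect any real obstacle here; the only subtlety is that the weakly primary hypothesis in the forward direction is exactly what lets us extract a power of $a$ annihilating $m$ modulo $N$, and then semiprimeness reduces that power to one. The reverse direction reuses verbatim the telescoping argument already carried out in Lemma~\ref{weakenv}, so it could equivalently be cited rather than repeated.
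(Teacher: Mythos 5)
Your proof is correct, and the forward direction takes a genuinely different (and lighter) route than the paper's. The paper proves ``semiprime $\Rightarrow$ weakly prime'' by first invoking Lemma~\ref{semiprime} to get $\langle E_{M}(N)\rangle=N$ and then applying Corollary~\ref{wp}, which rests on the primary decomposition of $N$, on \cite[Proposition 3.1]{behboodi} (the associated primes of a weakly primary submodule form a chain), and on Theorem~\ref{wprime}; this chain of results uses the standing hypotheses that $R$ is Noetherian and $M$ is finitely generated. Your argument instead works directly from the definitions: given $abm\in N$, weak primariness yields $bm\in N$ or $a^{k}m\in N$, and in the latter case semiprimeness collapses the power to give $am\in N$. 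This is shorter, avoids the decomposition machinery entirely, and in particular proves the statement for arbitrary rings and modules, whereas the paper's proof is tied to the Noetherian finitely generated setting. The converse direction in your proposal is the same descent argument the paper uses (and, as you note, is already contained in the proof of Lemma~\ref{weakenv}). No gaps.
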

\emph{Proof.} Suppose $N$ is semiprime. Then by Lemma \ref{semiprime}, $\langle E_{M}(N) \rangle=N$. Since $N$ is weakly primary, $N$ is weakly prime by Corollary \ref{wp}.

Conversely assume $N$ is weakly prime. Let $r\in R, m\in M$ and $r^{k}m\in N$ for some $k\in \mathbb{Z}^{+}$. $r^{k}m\in N$ implies that $rm\in N$ or $r^{k-1}m\in N$. If $rm\in N$, then $N$ is semiprime. If $r^{k-1}m\in N$, then by the same process $rm\in N$. Hence for all cases $N$ is semiprime submodule. $\Box$

\section{Weakly Radical}

The next two results are quiet useful for our purpose.

\begin{lem} \cite[Lemma 2.3]{smith} \label{quot}
For every prime ideal $p$ of $R$ such that $(N:M) \subseteq p$, $(N+pM:M)=p$.
\end{lem}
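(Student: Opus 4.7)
The inclusion $p \subseteq (N+pM:M)$ is immediate, since any $r \in p$ satisfies $rM \subseteq pM \subseteq N+pM$. So the entire content of the lemma lies in proving the reverse inclusion $(N+pM:M) \subseteq p$, and that is where I would focus.

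The plan is to apply the classical determinant trick. Fix generators $m_1,\ldots,m_n$ of $M$ (available by the standing assumption that $M$ is finitely generated over the Noetherian ring $R$) and pick an arbitrary element $r \in (N+pM:M)$. Then for each $i$ one has $rm_i = \eta_i + \sum_{j=1}^n a_{ij}m_j$ with $\eta_i \in N$ and $a_{ij} \in p$. In matrix form, the matrix $B = rI_n - (a_{ij})$ sends the column $(m_1,\ldots,m_n)^{T}$ into $N^n$. Multiplying by the classical adjugate of $B$, I get $\det(B)\,m_i \in N$ for every $i$, so $\det(B) \in (N:M) \subseteq p$.

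Expanding the determinant gives $\det(B) = r^n + c_{n-1}r^{n-1} + \cdots + c_0$, where each $c_i$ is a polynomial in the entries $a_{ij}\in p$ and therefore lies in $p$. Since the whole expression lies in $p$ and the lower-order terms $c_{n-1}r^{n-1} + \cdots + c_0$ do as well, $r^n \in p$; primality of $p$ then forces $r \in p$.

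The main obstacle is really just organizing the determinant-trick bookkeeping carefully so that the membership $\det(B)\in (N:M)$ is not lost; once the finite generation of $M$ is brought to bear, the argument reduces to a standard Nakayama-type manipulation. An equivalent route would be to localize at $p$ and apply Nakayama's lemma directly to the finitely generated $R_p$-module $(M/N)_p$, which is nonzero precisely because its annihilator $(N:M)$ sits inside $p$; if $r \notin p$ then $r$ would be a unit in $R_p$, forcing $(M/N)_p = pR_p\,(M/N)_p$ and hence $(M/N)_p = 0$, a contradiction. That variant reaches the same conclusion at essentially the same cost.
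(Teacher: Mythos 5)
Your proof is correct. Note that the paper itself offers no proof of this lemma---it is imported verbatim as Lemma 2.3 of \cite{smith}---so there is no in-text argument to compare against; what you have written is a complete, self-contained substitute. The easy inclusion $p \subseteq (N+pM:M)$ is exactly as you say. For the converse, writing $rm_i = \eta_i + \sum_j a_{ij}m_j$ with $\eta_i\in N$ and $a_{ij}\in p$, and multiplying $B=rI_n-(a_{ij})$ by its adjugate, does give $\det(B)\,m_i\in N$ for each generator (the adjugate has entries in $R$ and $N$ is a submodule, so membership in $N$ is preserved), hence $\det(B)\in (N:M)\subseteq p$; since every coefficient of $\det(B)=r^n+c_{n-1}r^{n-1}+\cdots+c_0$ below the leading term involves at least one $a_{ij}$ and so lies in $p$, you get $r^n\in p$ and then $r\in p$ by primality. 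The localization variant is equally sound: $(M/N)_p\neq 0$ precisely because the annihilator $(N:M)$ of the finitely generated module $M/N$ is contained in $p$, and Nakayama then rules out $r\notin p$. Either route uses only that $M$ is finitely generated (Noetherianity of $R$ is not needed), which matches the paper's standing hypotheses.
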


\begin{cor}\cite[Corollary 2.4]{smith}
$p \in Ass(M/(N+pM))$ if and only if $(N:M) \subseteq p$.
\end{cor}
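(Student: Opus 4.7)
The statement is a biconditional, so I would split it into two directions. For the forward implication, I would suppose $p \in \mathrm{Ass}(M/(N+pM))$ and appeal to the standard fact that every associated prime of a module contains that module's annihilator; applied here this yields $(N+pM:M) \subseteq p$. Since $N \subseteq N+pM$, we have $(N:M) \subseteq (N+pM:M) \subseteq p$, finishing this direction.

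For the reverse implication, I would assume $(N:M) \subseteq p$ and invoke Lemma \ref{quot} to conclude $(N+pM:M) = p$. This already forces $N+pM$ to be a proper submodule of $M$ (otherwise the colon would be all of $R$), so $M/(N+pM)$ is a nonzero finitely generated module over the Noetherian ring $R$ whose annihilator equals $p$. I would then invoke the standard result that every prime minimal over the annihilator of a finitely generated module over a Noetherian ring is an associated prime of that module; since $p$ is a prime ideal equal to the annihilator, it is trivially minimal over itself, so $p \in \mathrm{Ass}(M/(N+pM))$.

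There is no serious obstacle here: both directions reduce, through Lemma \ref{quot}, to well-known facts about associated primes in the Noetherian/finitely generated setting. The only small piece of care required is to confirm that $M/(N+pM)$ is nonzero so that $\mathrm{Ass}(M/(N+pM))$ is meaningful at all, and this drops out automatically from Lemma \ref{quot} producing a proper colon ideal.
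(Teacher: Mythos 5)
Your argument is correct and is essentially the intended derivation: the paper states this result as a quoted corollary of Lemma \ref{quot} without giving a proof, and the standard route is exactly the one you take --- associated primes contain the annihilator for the forward direction, and Lemma \ref{quot} combined with the fact that a prime minimal over the annihilator of a nonzero finitely generated module over a Noetherian ring is associated for the converse. Your remark that Lemma \ref{quot} automatically forces $N+pM$ to be proper is the right way to dispose of the only degenerate case.
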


We can generalize this results.

\begin{lem} \label{quasienv}
If $(N:M)=p_1$ for a prime ideal $p_1$, then $(\langle E_M(N) \rangle :M )=p_1$.
\end{lem}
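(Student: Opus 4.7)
The plan is to prove both inclusions of the claimed equality. The inclusion $p_1 \subseteq (\langle E_M(N)\rangle : M)$ is immediate from $N \subseteq \langle E_M(N)\rangle$ together with $(N:M) = p_1$. All the work lies in the reverse inclusion $(\langle E_M(N)\rangle : M) \subseteq p_1$.

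First I would pin down the structure of a minimal primary decomposition $N = Q_1 \cap \cdots \cap Q_k$ with $\sqrt{(Q_i:M)} = p_i$. Since $(N:M) = p_1$ is prime (hence equals its own radical),
$$p_1 = \sqrt{(N:M)} = \sqrt{\bigcap\nolimits_i (Q_i:M)} = \bigcap\nolimits_i p_i,$$
which forces $p_1 \subseteq p_i$ for every $i$. A short prime avoidance argument then shows some $p_i$ must actually equal $p_1$: if every containment were strict, I could pick $x_i \in p_i \setminus p_1$ for each $i$, and then $x_1 x_2 \cdots x_k \in \bigcap\nolimits_i p_i = p_1$, contradicting the primality of $p_1$. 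After relabeling I may assume $p_1 \in Ass(M/N)$ with corresponding primary component $Q_1$ (so $p_1$ is in fact the unique isolated prime, but I only need it to appear in the decomposition).

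Next I would apply Theorem~\ref{envelope} to write
$$\langle E_M(N)\rangle = N + \Bigl(\bigcap\nolimits_i p_i\Bigr)M + \sum_{\emptyset\neq T\subsetneq S}\Bigl(\bigcap\nolimits_{i\in T}p_i\Bigr)\Bigl(\bigcap\nolimits_{i\in S\setminus T}Q_i\Bigr),$$
and then show every summand on the right is contained in $Q_1 + p_1 M$. The first two pieces are clear since $N \subseteq Q_1$ and $\bigcap_i p_i = p_1$. For a cross term indexed by $T$, either $1\in T$, in which case $\bigcap_{i\in T}p_i \subseteq p_1$ and the term lies in $p_1 M$, or $1 \in S\setminus T$, in which case $\bigcap_{i\in S\setminus T}Q_i \subseteq Q_1$ and the term (as an ideal times a submodule) still lies in $Q_1$. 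Hence $\langle E_M(N)\rangle \subseteq Q_1 + p_1 M$.

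Finally, since $(Q_1:M) \subseteq \sqrt{(Q_1:M)} = p_1$, Lemma~\ref{quot} applied to $Q_1$ and the prime $p_1$ yields $(Q_1 + p_1 M : M) = p_1$. Combining, $(\langle E_M(N)\rangle : M) \subseteq (Q_1 + p_1 M : M) = p_1$, closing the inclusion. The main obstacle I anticipate is the opening prime avoidance step that forces $p_1$ to appear as an associated prime; once that is in hand, the rest is a clean bookkeeping with the explicit envelope formula of Theorem~\ref{envelope} and a single application of Lemma~\ref{quot}.
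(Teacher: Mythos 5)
Your proof is correct and follows essentially the same route as the paper's: expand $\langle E_M(N)\rangle$ via Theorem~\ref{envelope}, split the cross terms according to whether $1\in T$ (those with $1\in T$ land in $p_1M$, the rest in $Q_1$), and finish with Lemma~\ref{quot}. The only differences are cosmetic — you bound the envelope by $Q_1+p_1M$ where the paper writes it exactly as $K+p_1M$ with $(K:M)\subseteq p_1$ — and you additionally spell out, via the product/prime-avoidance step, why $p_1$ must actually occur among the associated primes, a point the paper takes for granted.
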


\emph{Proof.}
Since $(N:M)=p_1$ and every associated prime of $N$ contains $(N:M)$, $N$ is a quasi-$p_1$-primary submodule. Suppose $N=Q_1\cap \cdots \cap Q_s$ is a minimal primary decomposition of $N$ where each $Q_i$ is $p_i$-primary. Let $S=\{1,\ldots,s\}$ and let $T$ be a non-empty proper subset of $S$. Clearly, $N$ is a quasi-$p_1$-primary submodule. Since all associated primes of $N$ contain $p_1$, $\sqrt{N:M}=p_1$ and $(\bigcap \limits_{i\in T} p_{i})(\bigcap \limits_{i\in S-T} Q_{i}) \subset p_1 M$ when $ 1 \in T$. Hence
$$
\langle E_M(N) \rangle=N+p_1 M+\sum\limits_{1 \not \in T \subsetneq S} (\bigcap \limits_{i\in T} p_{i})(\bigcap \limits_{i\in S \setminus T} Q_{i}).
$$

  Since $(\bigcap \limits_{i\in S\setminus T} Q_{i}) \subset Q_1$ when $1 \not \in T$, $$\big((\bigcap \limits_{i\in T} p_{i})(\bigcap \limits_{i\in S-T} Q_{i})\big):M \subseteq Q_1:M \subseteq p_1.$$

Let
$$
K=\big (N+\sum\limits_{1 \not \in T\subsetneq S} (\bigcap \limits_{i\in T} p_{i})(\bigcap \limits_{i\in S \setminus T} Q_{i})\big )
.$$
Then $\langle E_M(N)=K+p_1M \rangle$ where $K:M \subseteq p_1$. By Lemma \ref{quot}, $(\langle E_M(N) \rangle :M )=p_1$.
$\Box$

The following concept is crucial in computation of radical of a submodule.

\begin{defn}
Let $p$ be any prime ideal of $R$. Following \cite{smith}, we let $cl_p(N)$ denote the $p$-closure of $N$, as defined by
$$
cl_p(N)=\{m \in M: rm \in N \: \mathrm{for} \: \mathrm{some} \: r \in R/p\}
$$
\end{defn}
It is clear then $cl_p(N)=\cup_{r \in R\setminus p} (N:r)$ and $ N \subseteq cl_p(N)$. The most interesting case is where $(N:M) \subseteq p$. In fact, if a minimal primary decomposition of $N$ is known, then $cl_p(N)$ can be computed as an intersection of certain primary submodule of the primary decomposition.
\begin{lem}\label{closure}
Let $p$ be prime ideal such that $(N:M) \subseteq p$. If $N=Q_1 \cap \cdots \cap Q_s$ is a minimal primary decomposition with $p_i$-primary submodule $Q_i$'s, then
$$
cl_p(N)=\bigcap_{p_i \subseteq p} Q_i
$$

\end{lem}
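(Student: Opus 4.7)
The plan is to prove the two inclusions directly from the definition $cl_p(N)=\bigcup_{r\in R\setminus p}(N:r)$, using only that each $Q_i$ is $p_i$-primary with $p_i=\sqrt{Q_i:M}$.

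For the inclusion $\bigcap_{p_i\subseteq p}Q_i\subseteq cl_p(N)$, I would take $m$ in the right-hand side and construct a single multiplier $r\in R\setminus p$ with $rm\in N$. For each index $i$ with $p_i\not\subseteq p$, choose an element $s_i\in p_i\setminus p$; since $p_i=\sqrt{Q_i:M}$, some power $s_i^{k_i}$ lies in $(Q_i:M)$, so $s_i^{k_i}M\subseteq Q_i$. Set $r=\prod_{p_i\not\subseteq p}s_i^{k_i}$. Because $p$ is prime and none of the $s_i$ lie in $p$, the product $r$ is not in $p$. Then $rm\in Q_i$ for every $i$ with $p_i\not\subseteq p$ (since $s_i^{k_i}m\in Q_i$ already), and $rm\in Q_i$ for every $i$ with $p_i\subseteq p$ (since $m\in Q_i$ by hypothesis). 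Intersecting over all $i$ yields $rm\in N$, so $m\in cl_p(N)$.

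For the reverse inclusion $cl_p(N)\subseteq\bigcap_{p_i\subseteq p}Q_i$, suppose $m\in cl_p(N)$, so there exists $r\in R\setminus p$ with $rm\in N=\bigcap_i Q_i$. Fix any $i$ with $p_i\subseteq p$. Then $r\notin p$ forces $r\notin p_i$. Now $rm\in Q_i$ together with $r\notin p_i=\sqrt{Q_i:M}$ and the definition of a primary submodule (if $rm\in Q_i$ and $m\notin Q_i$, then $r\in\sqrt{Q_i:M}$) forces $m\in Q_i$. Hence $m\in\bigcap_{p_i\subseteq p}Q_i$, completing the proof.

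The argument is essentially a standard prime-avoidance/primary-decomposition manipulation; the only subtlety worth flagging is that in the first inclusion we genuinely need a common multiplier, not an individual $r$ for each bad index, and this is handled by multiplying the chosen $s_i^{k_i}$ together and invoking primality of $p$. No other step looks like a real obstacle.
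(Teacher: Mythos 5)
Your proof is correct and follows essentially the same route as the paper's: one inclusion via the primary property of each $Q_i$ with $p_i\subseteq p$ (so $r\notin p_i$ forces $m\in Q_i$), and the other via a single common multiplier outside $p$ built as a product of elements annihilating $M$ modulo the $Q_i$ with $p_i\not\subseteq p$. The only cosmetic difference is that the paper picks $r_i\in(Q_i:M)\setminus p$ directly, whereas you pick $s_i\in p_i\setminus p$ and pass to a power $s_i^{k_i}\in(Q_i:M)$ — an equivalent maneuver.
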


\emph{Proof.}
Let $r \in R \setminus p$. Then by Lemma \ref{2}
$$
(N:r)=\bigcap_{i=1}^s (Q_i:r)= \big ( \bigcap_{p_i \nsubseteq p}(Q_i:r) \big ) \cap \big( \bigcap_{p_i \subseteq p} Q_i \big).
$$
If $p_i \subseteqq p$ for all $i$'s, then we obtain the result. If $p_i \nsubseteq p$ for some $i$, then there exists $r_i \in (Q_i:M)\setminus p$. Let $r_0=\prod_{p_i \nsubseteqq p} r_i$. Since $r_0 \in (Q_i:M)$ for each $i$ satisfying $ p_i \nsubseteqq p$, Lemma \ref{2} implies
$$
(N:r_0)=\big ( \bigcap_{p_i \nsubseteq p}(Q_i:r_0) \big ) \cap \big( \bigcap_{p_i \subseteq p} Q_i \big)= \bigcap_{p_i \subseteq p}Q_i.
$$
The conclusion is obvious.
$\Box$

In \cite{smith}, the authors just compute $cl_p(N+pM)$ which is the only closure needed in the computation of the radical. The above lemma gives a method of computation  $cl_p(N)$ for any submodule $N$ whose primary decomposition is known.

Before defining similar concept for the computation of weakly radical, we need the following result.

\begin{thm}\label{formula}
If $(N:M)=p$ for a prime ideal $p$ of $R$, then the weakly radical formula hold for $N$.
\end{thm}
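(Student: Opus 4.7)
The plan is to combine Lemma~\ref{quasienv}, the ascending chain condition on $M$, and Lemma~\ref{quasi}. Since the inclusion $UE_M(N) \subseteq wrad_M(N)$ is automatic from the observations in the introduction, the task reduces to showing the reverse inclusion, which I will do by realising $UE_M(N)$ itself as a finite intersection of weakly prime submodules that contain $N$.

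First I would show by induction on $n$ that $(\langle E_n(N)\rangle : M) = p$ for every $n \geq 0$. The base case $n=0$ is the hypothesis $(N:M)=p$. For the inductive step, if $(\langle E_n(N)\rangle : M) = p$, then Lemma~\ref{quasienv} applied to the submodule $\langle E_n(N)\rangle$ yields $(\langle E_M(\langle E_n(N)\rangle) \rangle : M) = p$, and by definition this is $(\langle E_{n+1}(N)\rangle : M) = p$. This is the decisive use of Lemma~\ref{quasienv}: it is precisely what propagates the colon condition along the envelope chain.

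Next, since $R$ is Noetherian and $M$ is finitely generated, $M$ is a Noetherian module, so the ascending chain $\langle E_1(N)\rangle \subseteq \langle E_2(N)\rangle \subseteq \cdots$ stabilizes at some stage. Set $L := UE_M(N)$; then $L = \langle E_n(N)\rangle$ for all sufficiently large $n$, so $\langle E_M(L)\rangle = L$ and $(L:M) = p$. Because $(L:M)=p$ is a prime ideal contained in every associated prime of $L$, the ideal $p$ is the unique minimal associated prime of $L$, and hence $L$ is a quasi-$p$-primary submodule of $M$.

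Finally, I would invoke Lemma~\ref{quasi} for $L$: since $L$ is quasi-$p$-primary and $\langle E_M(L)\rangle = L$, the submodule $L$ is the intersection of finitely many weakly prime submodules $W_1,\ldots,W_m$ of $M$ containing $L$. Because $W_i \supseteq L \supseteq N$ for each $i$, each $W_i$ is a weakly prime submodule containing $N$, whence $wrad_M(N) \subseteq W_1 \cap \cdots \cap W_m = L = UE_M(N)$, and the reverse inclusion yields equality. The main obstacle is the persistence of the colon condition along the envelope chain; without Lemma~\ref{quasienv} one could not guarantee that the stable value $L$ is quasi-primary, and Lemma~\ref{quasi} would be unavailable.
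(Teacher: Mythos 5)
Your proposal is correct and follows essentially the same route as the paper: it propagates $(\langle E_n(N)\rangle:M)=p$ along the envelope chain via Lemma~\ref{quasienv}, uses Noetherianness to stabilize the chain at $UE_M(N)$, and then applies Lemma~\ref{quasi} to write $UE_M(N)$ as a finite intersection of weakly prime submodules containing $N$. Your write-up merely makes explicit the induction and the final containment $wrad_M(N)\subseteq UE_M(N)$ that the paper leaves implicit.
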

\emph{Proof.}
Since $(N:M)=p$, $N$ and $\langle E_M(N) \rangle$ are  quasi-$p$-primary submodules by Lemma \ref{quasienv}. Furthermore $\langle E_n(N) \rangle$ is also quasi-$p$-primary for any $ n\in \mathbb{Z}^+$ by the same reason. Since we assumed $M$ is a finitely generated module over a Noetherian ring $R$, $UE_M(N)=\langle E_k (N) \rangle$ for some $k\in \mathbb{Z}^+$. That means $\langle E_k (N) \rangle= \langle E_t (N) \rangle$ for every $ t \geq k$. By Lemma \ref{quasi}, $UE_M(N)$ can be expressed as an intersection of finitely many weakly prime submodules containing $N$. Therefore $wrad_M(N)=UE_M(N)$.
$\Box$

\begin{defn}
Let $(N:M)=p$ for a prime ideal $p$ of $R$. We let $wcl_p(N)$ denote the weakly $p$-closure of $N$, as defined by $wcl_p(N)=UE_M(N)$.
\end{defn}

At this point we would like to emphasize that the associated primes of a weakly primary submodules should form a chain according to \cite[Proposition 3.1]{behboodi}. Since every weakly prime submodule is also weakly primary, the associated primes of the prime submodules also satisfied this property. Hence all weakly prime submodules have a unique isolated prime.

\begin{lem}
If $P$ is a weakly prime submodule containing a submodule $N$ and if $p_1$ is the isolated prime of $P$, then $p_1 \supseteq (N:M)$.
\end{lem}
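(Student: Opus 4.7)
The plan is to reduce the inclusion $(N:M) \subseteq p_1$ to the analogous statement for $(P:M)$ in place of $(N:M)$, and then exploit the chain structure of associated primes for weakly prime (hence weakly primary) submodules that was noted in the paragraph immediately preceding the lemma.

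First, since $N \subseteq P$, the containment $(N:M) \subseteq (P:M)$ is immediate from the definition of the colon ideal. So it suffices to prove $(P:M) \subseteq p_1$. For this, I would take a minimal primary decomposition $P = Q_1 \cap \cdots \cap Q_s$ with $Q_i$ being $p_i$-primary and $p_1$ the isolated prime (relabelling so that $p_1$ is the unique minimal element; by the remark citing \cite[Proposition 3.1]{behboodi} these associated primes actually form a chain $p_1 \subset p_2 \subset \cdots \subset p_s$, but we only need that $p_1 \subseteq p_i$ for every $i$).

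Next, from $(P:M) = \bigcap_{i=1}^{s}(Q_i:M)$ and the fact that radicals commute with finite intersections, we get
$$\sqrt{(P:M)} = \bigcap_{i=1}^{s}\sqrt{(Q_i:M)} = \bigcap_{i=1}^{s} p_i = p_1,$$
the last equality because $p_1$ is contained in every other $p_i$. Hence $(P:M) \subseteq \sqrt{(P:M)} = p_1$, and combining with the first step yields $(N:M) \subseteq (P:M) \subseteq p_1$, as required.

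There is essentially no obstacle here; the only subtle point is making sure one is entitled to say that the associated primes of the weakly prime submodule $P$ form a chain with a unique minimal element. This is precisely the content of the remark just above the lemma (every weakly prime submodule is weakly primary, and weakly primary submodules have their associated primes totally ordered by inclusion per \cite[Proposition 3.1]{behboodi}). Once that is invoked, the rest is bookkeeping with primary decompositions and radicals.
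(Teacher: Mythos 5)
Your proof is correct, but it takes a genuinely different (and more elementary) route than the paper. The paper invokes its envelope machinery: since $P$ is weakly prime, Lemma~2.1 gives $\langle E_M(P)\rangle = P$, and then Corollary~1.8 forces the isolated component $Q_1$ of a minimal primary decomposition of $P$ to be an actual prime submodule, whence $(N:M)\subseteq (P:M)\subseteq (Q_1:M)=p_1$. You instead bypass weak primeness entirely after the first step: $(N:M)\subseteq (P:M)\subseteq \sqrt{(P:M)}=\bigcap_i p_i\subseteq p_1$, using only that radicals commute with finite intersections and that $p_1$ is an associated (indeed the unique isolated) prime. In fact your argument shows the statement holds for \emph{any} associated prime of \emph{any} submodule $P\supseteq N$ admitting a primary decomposition — the hypothesis that $P$ is weakly prime is only needed to make sense of "the" isolated prime, not to run the inclusion. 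What the paper's longer route buys is the stronger structural by-product that the isolated component of a weakly prime submodule is itself a prime submodule, so that $(Q_1:M)$ equals $p_1$ on the nose; your route buys brevity and generality. One cosmetic remark: you do not actually need the full chain condition from \cite[Proposition 3.1]{behboodi} — even $\bigcap_i p_i \subseteq p_1$ (which holds simply because $p_1$ is one of the $p_i$) would suffice for the final inclusion.
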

\emph{Proof.}
Let $P$ be a weakly prime submodule with the isolated prime $p_1$. Therefore $E_M(P)=P$ by Lemma \ref{weakenv}. If $P=Q_1 \cap \cdots \cap Q_s$ is the minimal primary decomposition where each $Q_i$ is $p_i$-primary, then $Q_1$ is $p_1$-prime submodule by Corollary \ref{3}. Hence $(N:M) \subseteq (P:M) \subseteq (Q_1:M)=p_1$.
$\Box$

Hence when computing weakly radical of a submodule, we can restrict ourself to the weakly prime submodules whose isolated primes contain $N:M$.

Given Lemma \ref{quot} and Theorem \ref{formula}, $wcl_p(N+pM)$ is of particular interest.
Of course, if $(N:M) \subseteq p$, $N \subseteq N+pM \subseteq UE(N+pM)=wcl_{p}(N+pM)=wrad_M(N+pM)$. Hence $wcl_{p}(N+pM)$ can be expressed as an intersection of weakly prime submodules containing $N$. The next theorem shows that these weakly prime submodules are minimal among the weakly prime submodules with isolated prime $p$ containing $N$.
\begin{thm}\label{wcl1}
Let $P $ be weakly prime submodule containing $N$ and let $p_1$ be the isolated prime of $P$, then $wcl_{p_1}(N+p_1M) \subseteq P$.
\end{thm}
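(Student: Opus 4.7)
The plan is to show $N + p_1 M \subseteq P$ and then invoke Theorem \ref{formula}, which identifies $wcl_{p_1}(N+p_1M)$ with $wrad_M(N+p_1M)$, the intersection of all weakly prime submodules containing $N+p_1M$. Since $P$ will then be one such submodule, the desired containment follows at once.

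First I would check that $wcl_{p_1}(N+p_1M)$ is actually defined. The lemma immediately preceding the theorem gives $(N:M) \subseteq p_1$, and Lemma \ref{quot} then yields $(N+p_1M:M) = p_1$, matching the hypothesis $(\cdot : M) = p$ required by the definition of $wcl_p$.

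The central step is to establish $p_1 M \subseteq P$. Since $P$ is weakly prime, Lemma \ref{weakenv} gives $\langle E_M(P)\rangle = P$. As the paragraph preceding the theorem records, \cite[Proposition 3.1]{behboodi} forces the associated primes of a weakly primary (hence weakly prime) submodule to form a chain starting at its isolated prime. So writing the minimal primary decomposition $P = Q_1 \cap \cdots \cap Q_s$ with $Q_i$ being $p_i$-primary and $p_1 \subset p_2 \subset \cdots \subset p_s$, we get $\sqrt{(P:M)} = \bigcap_{i=1}^s p_i = p_1$. Theorem \ref{envelope} then contains $(\bigcap_{i=1}^s p_i)M = p_1 M$ as one of its summands, so $p_1 M \subseteq \langle E_M(P)\rangle = P$. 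Combined with $N \subseteq P$, this yields $N + p_1 M \subseteq P$.

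The proof now concludes by a single appeal to Theorem \ref{formula}: $wcl_{p_1}(N+p_1M) = wrad_M(N+p_1M) = \bigcap \{P' : P' \text{ weakly prime}, P' \supseteq N+p_1M\}$, and $P$ is one of the submodules being intersected, so $wcl_{p_1}(N+p_1M) \subseteq P$. I expect no genuine obstacle here; the only point that demands a moment of care is recognizing that $\sqrt{(P:M)}$ collapses to $p_1$ precisely because the associated primes of a weakly prime submodule are chain-ordered, which is what converts the general envelope formula into the clean statement $p_1 M \subseteq P$.
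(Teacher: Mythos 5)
Your proof is correct and follows essentially the same route as the paper: both establish $N+p_1M\subseteq P$ by combining $\langle E_M(P)\rangle=P$ with the chain structure of the associated primes of $P$ (so that $p_1M$ appears as a summand in the envelope formula), and both then invoke Theorem \ref{formula}. The only cosmetic difference is at the final step, where the paper uses monotonicity of $UE_M$ ($UE_M(N+p_1M)\subseteq UE_M(P)=P$) while you use the identification of $wcl_{p_1}(N+p_1M)$ with the weakly radical and note that $P$ is one of the weakly prime submodules being intersected.
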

\emph{Proof.}
Since $P$ is weakly prime submodule with the isolated prime $p_1$, $E_M(P)=P$ and its associated primes forms a chain $p_1 \subset p_2 \cdots \subset p_s$. Since $\langle E_M(P) \rangle = P+p_1M+ \cdots $, $N+p_1M \subset P+p_1M \subset E_M(P)=P$. Since $(N+p_1M):M=p_1$, the the weakly radical formula hold for $N+p_1M$. Then $$wcl_{p_1}(N+p_1M)=UE_M(N+p_1M)\subset UE_M(P)=P.$$
$\Box$

\begin{prop}\label{wcl2}
If $P_1$ and $P_2$ are weakly primary submodules containing $N$ with isolated primes $p_1$ and $p_2$ respectively and $p_1 \subseteq p_2$, then $wcl_{p_1}(N+p_{1}M) \subseteq wcl_{p_2}(N+p_{2}M)$.
\end{prop}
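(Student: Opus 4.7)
The plan is to apply Theorem~\ref{formula} to each of $N+p_1M$ and $N+p_2M$ and then reduce the desired inclusion to the obvious monotonicity of $wrad_M$ under submodule inclusion.

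First I would check that $(N:M)\subseteq p_i$ for $i=1,2$. Since $P_i$ is weakly primary with isolated prime $p_i$, the prime $p_i$ lies in $Ass(M/P_i)$, and as noted in the introduction every associated prime of a submodule contains the corresponding annihilator ideal of the quotient; hence $p_i\supseteq(P_i:M)\supseteq(N:M)$. By Lemma~\ref{quot} this gives $(N+p_iM:M)=p_i$ for $i=1,2$, so Theorem~\ref{formula} applies to each of the two submodules and yields
$$wcl_{p_i}(N+p_iM)=UE_M(N+p_iM)=wrad_M(N+p_iM)\qquad(i=1,2).$$

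Since $p_1\subseteq p_2$, the inclusion $N+p_1M\subseteq N+p_2M$ is immediate. The final step is the monotonicity of the weakly radical: every weakly prime submodule of $M$ that contains $N+p_2M$ automatically contains $N+p_1M$, so the family of weakly prime submodules being intersected in the definition of $wrad_M(N+p_1M)$ is at least as large as the one being intersected in $wrad_M(N+p_2M)$, whence $wrad_M(N+p_1M)\subseteq wrad_M(N+p_2M)$. Combining this with the two displayed identities above gives the desired inclusion.

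The argument is essentially bookkeeping once Theorem~\ref{formula} and Lemma~\ref{quot} are in place; I do not expect a real obstacle. The only genuine point to verify is $(N:M)\subseteq p_i$, which is automatic from the weakly primary hypothesis on each $P_i$ together with the containment $N\subseteq P_i$.
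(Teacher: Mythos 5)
Your proof is correct, but it takes a different route from the paper's. The paper argues directly with the envelope operator: since $p_1\subseteq p_2$ gives $N+p_1M\subseteq N+p_2M$, and $E_M(\cdot)$ is monotone with respect to inclusion, one gets $\langle E_n(N+p_1M)\rangle\subseteq\langle E_n(N+p_2M)\rangle$ for every $n$ by induction, hence $UE_M(N+p_1M)\subseteq UE_M(N+p_2M)$, which is the claim by the definition of $wcl$. That argument is elementary and does not invoke Theorem~\ref{formula} at all. You instead pass through the weakly radical formula: you verify $(N:M)\subseteq p_i$ (correctly, via $p_i\in Ass(M/P_i)$ and $N\subseteq P_i$), apply Lemma~\ref{quot} to get $(N+p_iM:M)=p_i$, invoke Theorem~\ref{formula} to identify $wcl_{p_i}(N+p_iM)$ with $wrad_M(N+p_iM)$, and finish with the (obvious) monotonicity of $wrad_M$. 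This costs you the full strength of Theorem~\ref{formula} (and hence the Noetherian and finite-generation hypotheses) where the paper needs only monotonicity of $E_M$; on the other hand, your preliminary check that $(N+p_iM:M)=p_i$ is a genuine gain, since it is exactly what is needed for the notation $wcl_{p_i}(N+p_iM)$ to be defined at all, a point the paper's proof passes over in silence.
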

\emph{Proof.}
Since $p_1 \subseteq p_2$, $N+p_1M \subseteq N+p_2M$. Then clearly $\langle E_M(N+p_{1}M) \rangle \subseteq \langle E_M(N+p_{2}M) \rangle$. Hence the conclusion is obvious.
$\Box$

If $R$ is a Noetherian ring and $I \subset R$ is an ideal, the set of associated primes of $I$ is the set  $Ass(I)=\{P \subset R | P \: \mathrm{prime} \: P = I:\langle b\rangle \: \mathrm{for} \: \mathrm{some} \: b \in R \}$.  The set of associated primes which are minimal with respect to set inclusion is denoted by $minAss(I)$. Hence using Theorem \ref{wcl1} and Proposition \ref{wcl2} we can give the following formula for the computation of $wrad_M(N)$.
\begin{cor}
$$wrad_M(N)=\bigcap\limits_{p \in minAss((N:M))} wcl_p(N+pM).$$
\end{cor}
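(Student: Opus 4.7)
The plan is to prove the two inclusions separately, using the preceding machinery as a black box.

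For the inclusion $\bigcap_{p \in minAss((N:M))} wcl_p(N+pM) \supseteq wrad_M(N)$, I would fix a minimal associated prime $p$ of $(N:M)$ and observe that Lemma \ref{quot} gives $(N+pM:M)=p$, so Theorem \ref{formula} applies to $N+pM$ and yields $wcl_p(N+pM)=UE_M(N+pM)=wrad_M(N+pM)$. Since $N \subseteq N+pM$, every weakly prime submodule containing $N+pM$ also contains $N$, hence $wrad_M(N) \subseteq wrad_M(N+pM) = wcl_p(N+pM)$. Intersecting over all $p \in minAss((N:M))$ gives this inclusion.

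For the reverse inclusion, I would let $P$ be an arbitrary weakly prime submodule containing $N$ and use the Lemma just proved: $P$ has a unique isolated prime $p_1$, and $p_1 \supseteq (N:M)$. Because $R$ is Noetherian, any prime containing $(N:M)$ contains at least one element of $minAss((N:M))$, so we may choose $p \in minAss((N:M))$ with $p \subseteq p_1$. Theorem \ref{wcl1} then yields $wcl_{p_1}(N+p_1M) \subseteq P$, and Proposition \ref{wcl2}, applied to $p \subseteq p_1$, yields $wcl_p(N+pM) \subseteq wcl_{p_1}(N+p_1M)$. Chaining these gives $wcl_p(N+pM) \subseteq P$, hence $\bigcap_{p \in minAss((N:M))} wcl_p(N+pM) \subseteq P$. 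Since $wrad_M(N)$ is by definition the intersection of all such $P$, the desired inclusion follows.

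I do not anticipate a serious obstacle: all the heavy lifting is carried by Theorem \ref{formula} (which turns $(N+pM:M)=p$ into the weakly radical formula), Theorem \ref{wcl1} (minimality of $wcl_{p_1}(N+p_1M)$ among weakly prime submodules with isolated prime $p_1$), Proposition \ref{wcl2} (monotonicity of $wcl$ in the prime), and the preceding lemma tying the isolated prime of a weakly prime submodule to $(N:M)$. The only mildly subtle point is the appeal to the Noetherian hypothesis to guarantee that every prime above $(N:M)$ dominates a minimal associated prime of $(N:M)$, which is a standard fact and matches the definition of $minAss$ given in the paragraph preceding the corollary.
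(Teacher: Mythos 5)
Your proposal is correct and follows exactly the route the paper intends (the paper omits the proof, merely citing Theorem \ref{wcl1} and Proposition \ref{wcl2}): one inclusion from Lemma \ref{quot} plus Theorem \ref{formula} applied to $N+pM$, the other from the lemma on isolated primes of weakly prime submodules combined with Theorem \ref{wcl1} and the monotonicity in Proposition \ref{wcl2}. Your handling of the Noetherian point (every prime over $(N:M)$ contains a minimal associated prime) is the right justification and completes the argument.
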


Now we illustrate the computation of the weakly radical of a submodule by an example. We again use the computer algebra system {{\sc Singular} } for the computations (see \cite{DGPS}).

\begin{ex}
Let $R=\mathbb{Q}[x,y,z]$ and let $M=R\oplus R\oplus R$. Consider the submodule
$$
N=\langle x^2\mathbf{e_1}+y^2\mathbf{e_2},x^2 z \mathbf{e_2},y^3 z \mathbf{e_1}+z^3\mathbf{e_3} \rangle.
$$

$Ass(M/N)=\{\langle z \rangle, \langle x \rangle \}$.
\smallskip

$W_1=N+\langle z \rangle M= \langle z \mathbf{e_1}, z\mathbf{e_2}, z\mathbf{e_3}, x^2\mathbf{e_1} +y^2 \mathbf{e_2} \rangle $.
\smallskip

Since $W_1$ is $\langle z \rangle$-prime, $wcl_{\langle z \rangle}(W_1)=UE_M(W_1)=W_1$.
\smallskip

$W_2=N+ \langle x \rangle M= \langle x\mathbf{e_1}, x\mathbf{e_2}, x\mathbf{e_3},y^2 \mathbf{e_2},y^3z \mathbf{e_1}+z^3 \mathbf{e_3} \rangle$.
\smallskip

$Ass(M/W_2)=\{p_1=\langle x \rangle,p_2= \langle x,z \rangle,p_3= \langle x,y \rangle\}$.
\smallskip

The primary decomposition of $W_2=Q_1 \cap Q_2 \cap Q_3$ where
\smallskip

$Q_1=\langle \mathbf{e_2}, x\mathbf{e_1}, x \mathbf{e_3}, y^3\mathbf{e_1}+z^2\mathbf{e_3} \rangle,$
\smallskip

$Q_2= \langle \mathbf{e_2}, z \mathbf{e_1}, z\mathbf{e_3}, x\mathbf{e_1}, x\mathbf{e_3} \rangle$ and
\smallskip

$Q_3= \langle \mathbf{e_3}, x\mathbf{e_1}, x\mathbf{e_2},y^2 \mathbb{e_1},y^2 \mathbf{e_2} \rangle$. Here each $Q_i$ is $p_i$-primary.

Using the Theorem \ref{envelope}, one can compute that
$$
\langle E_M(W_2) \rangle=\langle y\mathbf{e_2}, x\mathbf{e_1}, x\mathbf{e_2}, x\mathbf{e_3},y^3z \mathbf{e_1}+z^3\mathbf{e_3} \rangle.
$$

In fact, $\langle E_M( \langle E_M(W_2) \rangle )\rangle= \langle E_M(W_2) \rangle$. Therefore $wcl_{\langle x \rangle}(W_2)=\langle E_M(W_2) \rangle$.

Thus

\begin{eqnarray*}
 wrad_M(N) &=& wcl_{\langle z \rangle}(W_1) \cap wcl_{\langle x \rangle}(W_2) \\
   &=&  \langle yz \mathbf{e_2}, xz\mathbf{e_1}, xz \mathbf{e_2}, xz \mathbf{e_3}, x^2 \mathbf{e_1}+y^2\mathbf{e_2},y^3z \mathbf{e_1}+z^3\mathbf{e_3} \rangle.
\end{eqnarray*}

\end{ex}


\end{document}